\documentclass[12pt]{amsart}
\usepackage{geometry}                
\geometry{a4paper}                   

\usepackage{graphicx}

\usepackage{amssymb}
\usepackage{hyperref}
\usepackage{float}
\usepackage{caption}
\usepackage{subcaption}

\DeclareGraphicsRule{.tif}{png}{.png}{`convert #1 `basename #1 .tif`.png}

\theoremstyle{plain} 
\newtheorem{theorem}{Theorem}[section]
\newtheorem{lemma}[theorem]{Lemma}

\theoremstyle{definition} 
\newtheorem{definition}[theorem]{Definition}
\newtheorem{remark}[theorem]{Remark}

\newcommand{\Z}{\mathbb{Z}}
\newcommand{\R}{\mathbb{R}}
\newcommand{\C}{\mathbb{C}}
\newcommand{\N}{\mathbb{N}}
\newcommand{\s}{\mathbb{S}}
\newcommand{\h}{\mathbb{H}}

\newcommand{\so}{\mbox{\bf SO}(3)}

\newcommand{\re}{\operatorname{Re}}

\newcommand{\Res}{\operatorname{Res}}

\def\bj{Bj\"o{}rling }
\def\we{Weierstrass }
\def\mo{M\"o{}bius }

\begin{document}

\begin{title}
{Explicit Bj\"o{}rling Surfaces with Prescribed Geometry}
\end{title}

\author
{Rafael L\'o{}pez}
\address{Rafael L\'o{}pez\\Departamento de Geometría y Topolog\'i{}a\\ Instituto de Matem\'aticas (IEMath-GR)\\
Universidad de Granada\\
18071 Granada
\\Spain}
\author{
Matthias Weber
}
\address{Matthias Weber\\Department of Mathematics\\Indiana University\\
Bloomington, IN 47405
\\USA}
\thanks{The  first  author was  partially supported by  the MINECO/FEDER grant MTM2014-52368-P. The second author  was partially supported by a grant from the Simons Foundation (246039 to Matthias Weber)}

\subjclass[2010]{Primary 53A10, 53C43; Secondary 53C45}
\date{\today}
\maketitle

\begin{abstract}
We develop a new method to construct explicit, regular minimal surfaces in Euclidean space that are defined on the entire complex plane with controlled geometry. More precisely we show that for a large class of planar curves $(x(t), y(t))$ one can find a third coordinate $z(t)$ and normal fields $n(t)$ along the space curve $c(t)=(x(t), y(t), z(t))$ so that the \bj formula applied to $c(t)$ and $n(t)$ can be explicitly evaluated. We give many  examples.
 \end{abstract}

\section{Introduction}

In several recent papers (eg \cite{bk1,dean,howhi1,mewe1,mi1}) embedded minimal disks have been constructed that have the appearance of a {\em coil}. They contain a core curve along which the surface normal rotates in a controllable way. Increasing the rotational speed of the normal allows then to construct and study minimal limit foliations and their singular sets.
The classical example  is the helicoid. Here, increasing the (constant) rotational speed is equivalent to scaling the surface.

The next complicated case of minimal M\"o{}bius strips with core curve a circle and a normal field that rotates with constant speed was studied by Mira (\cite{mi1}).
 
More recently, and Meeks and the second author (\cite{mewe1}) have generalized this construction to surfaces with core curve any given compact $C^1$ curve. Instrumental for this construction was the possibility to explicitly control the model case of a circular core curve. A surprising byproduct of this investigation was that the ``circular helicoids'' where not only defined near the core circle, but were in fact finite total curvature minimal surfaces.

Our approach to construct new explicit  and global examples of minimal surfaces  where the normal rotates arbitrarily fast about the core curve utilizes the \bj formula.
This is an integral formula that produces a minimal surface for any given real analytic space curve $c$ and unit normal field $n$ along $c$. Our first problem  is that the
integrals arising in this formula are rarely explicit. Using quaternions, we overcome this difficulty by constructing suitable curves in $\so$ that serve as frame fields. While this alone gives us a plethora of new examples, we face a second problem: We would like (to some extent) control the geometry of the constructed surfaces. 

This is achieved in the second part of the paper, where we show that a large class of planar curves (containing many classical curves) admit lifts into Euclidean space that can be used as core curves for explicit minimal coils. For closed planar curves, the lifted curve will be periodic, and its translational period can be controlled by a parameter in the construction. We show that for a generic choice of the parameter, the surface is defined in the entire complex plane and regular everywhere.

In the last section we give many new examples.  For instance, the method is powerful enough to create an explicit knotted minimal \mo band of finite total curvature.

\section{Explicit \bj surfaces}\label{sec:explicit}

We begin by reviewing the \bj formula.
Let $c:I\subset\R\rightarrow\R^3$ be a real analytic curve, called the {\em core curve}, and let $n$ be a real analytic unit vector field along $c$ with $\langle c'(t),n(t)\rangle=0$ for every $t\in I$. By analyticity, the functions $c$ and $n$ have holomorphic extensions $c(z)$ and $n(z)$ to a simply-connected domain $\Omega$ with $I \subset\Omega$. Fix $t_0\in I$ and define
\begin{equation}\label{bj}
X(u,v)=X(z)=\re \left(c(z)-i\int_{t_0}^z n(w)\wedge c'(w)\ dw \right),\quad\quad z=u+iv \ .
\end{equation}

We point out that the integral in \eqref{bj} is taken along an arbitrary path in $\Omega$ joining $t_0$ and $z$ and it does not depend on the chosen path because $\Omega$ is simply-connected. The  surface $X(u,v)$  is the unique minimal surface such that  the curve $c$ is the parameter curve $v=0$ and the unit normal field to the surface $X(u,v)$ coincides with $n$ along $c$ (\cite{dhkw}). We say that $X(u,v)$ is the \bj surface with \bj data $\{c,n\}$.  

As the parametrization given by the \bj formula is conformal, one can always find \we data $G$ and $dh$ such that
\begin{equation}\label{eqn:wei}
c' - i\cdot  n\wedge c' = 
\begin{pmatrix}
\frac12\left(\frac1G-G\right)\, dh \\
\frac{i}2\left(\frac1G+G\right)\, dh \\
dh\\
\end{pmatrix} \ .
\end{equation}
Here $G$ denotes the stereographic projection of the Gauss map and $dh$ the height differential as usual.
In terms of these \we data, the conformal factor of the Riemannian metric of $X(u,v)$ is given by \cite{ka5}
\[
|dX| = \frac12\left(|G| +\frac1{|G|}\right) |dh| \ .
\]
This will allow us to determine when our surfaces are regular.

In order to construct minimal surfaces with arbitrarily fast rotating normal, we would like to begin with a real analytic space curve $c$ and two unit normal fields $n_1(t)$, $n_2(t)$ such that $c'(t)$, $n_1(t)$ and $n_2(t)$ are an orthogonal basis of $\R^3$ for each $t$. Then we  form a spinning normal relative to $n_1$ and $n_2$ by writing
\[
n(t) = \cos(\alpha(t))\cdot n_1(t) + \sin(\alpha(t) )\cdot n_2(t)
\]
for a suitable rotation angle function $\alpha(t)$, and use $c$ and $n$ as \bj data.

Our construction of  globally defined and explicit examples is based on the following idea. If we could choose $c$, $n_1$ and $n_2$ such that the matrix $(c'(t), n_1(t), n_2(t))$ is a curve in $\so$ with entries given as trigonometric polynomials, then the integral in the \bj formula can be explicitly evaluated for any linear function $\alpha(t) = at+b$. Note that both the helicoid and the circular helicoid in \cite{mewe1} are of this form.

We can in fact do somewhat better than that, both relaxing the requirements on the matrix entries and on the matrix itself.
We begin by formalizing which functions we allow as coordinate functions.

\begin{definition}
We call a real valued function of a real variable $t$ {\em polyexp} if it is a linear combination  of functions of the form $t^n e^{k t}$,
where $n\in\N$ and $k\in\C$.  
\end{definition}

Hence polynomials, exponentials, and trigonometric functions are all polyexp. Using integration by parts and induction, we obtain the following simple observation.

\begin{lemma}
The products, integrals and derivatives of polyexp functions are again polyexp.
\end{lemma}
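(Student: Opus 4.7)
The plan is to reduce all three claims to the case of a single monomial $t^n e^{k t}$ using linearity, and then to handle each operation separately.

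For \emph{derivatives}, by linearity it suffices to differentiate a single monomial $t^n e^{k t}$. The product rule gives $n\, t^{n-1} e^{k t} + k\, t^n e^{k t}$, which is a linear combination of two monomials of the required form (with the convention that the $n=0$ summand vanishes), so the result is polyexp.

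For \emph{products}, again by bilinearity I would reduce to multiplying two monomials $t^n e^{k t}$ and $t^m e^{\ell t}$. Their product is $t^{n+m} e^{(k+\ell) t}$, which is already in the generating family since $n+m \in \N$ and $k+\ell \in \C$.

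The \emph{integration} case is the only one requiring real work. By linearity I would reduce to showing that $\int t^n e^{k t}\, dt$ is polyexp, and then split on whether $k = 0$ or $k \neq 0$. If $k=0$, the antiderivative is $t^{n+1}/(n+1)$, which is polyexp (the $k=0$ case of the generators). If $k \neq 0$, I would proceed by induction on $n$: the base case $n=0$ gives $e^{k t}/k$, and for the inductive step, integration by parts yields
\[
\int t^n e^{k t}\, dt \;=\; \frac{t^n e^{k t}}{k} \;-\; \frac{n}{k} \int t^{n-1} e^{k t}\, dt,
\]
where the remaining integral is polyexp by induction, so the whole expression is a linear combination of polyexp functions, hence polyexp. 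The main (minor) obstacle is simply remembering to separate the $k=0$ case from $k\neq 0$, since the integration-by-parts recursion divides by $k$. Together these three reductions prove the lemma.
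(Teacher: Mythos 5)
Your proof is correct and follows exactly the route the paper indicates (the paper only remarks that the lemma follows ``using integration by parts and induction'' and gives no further detail): reduction to monomials $t^n e^{kt}$ by linearity, closure of the generating family under differentiation and multiplication, and integration by parts with induction on $n$, correctly separating the $k=0$ case. Nothing further is needed.
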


In fact, based on the formulas that follow, we could allow any class of analytic functions that is closed under sums, products, derivatives, and integration. For instance, if one is not interested in the explicit nature of new examples but rather in their global features, one could allow all entire functions that are real valued on the real axis.

As a consequence of this definition, we obtain the following corollary, which is the basis for our construction.

\begin{theorem}\label{thm:key}
Given  polyexp  vectors $e_1(t)$, $e_2(t)$, $e_3(t)\in \R^3$ and a nonvanishing polyexp function $\mu(t)$ such that $\frac 1{\mu} (e_1, e_2, e_3)\in \so$, we can explicitly find a  curve $c(t)$ with $c'(t) = e_1(t)$. Then, the curve $c(t)$ and the rotating normal 
\[
n(t)  =\frac 1{\mu(t)} \left(  \cos(at+b) e_2(t) + \sin(a t+b) e_3(t)\right)
\]
provide \bj data that can be explicitly integrated. Moreover, the resulting \bj surface is defined on the entire complex plane.
\end{theorem}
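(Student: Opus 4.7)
The plan is to unpack the \bj formula term by term and show that the \so hypothesis makes every piece polyexp, after which the lemma on closure under integration does all the work. Since $e_1$ is polyexp, the preceding lemma produces an explicit polyexp antiderivative $c(t)=\int_{t_0}^t e_1(s)\,ds$ with $c'(t)=e_1(t)$. Before integrating anything further, verify that $(c,n)$ is valid \bj data: the hypothesis $\frac 1\mu(e_1,e_2,e_3)\in\so$ says precisely that $\langle e_i,e_j\rangle=\mu^2\delta_{ij}$, from which $\langle c'(t),n(t)\rangle=0$ and $|n(t)|=1$ on the nose.

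The key step is the cross product $n\wedge c'$. Because the columns of $\frac 1\mu(e_1,e_2,e_3)$ form a right-handed orthonormal frame, one reads off $e_2\wedge e_1=-\mu e_3$ and $e_3\wedge e_1=\mu e_2$. Substituting,
\[
n(t)\wedge c'(t)=\frac{1}{\mu(t)}\Bigl(\cos(at+b)(e_2\wedge e_1)+\sin(at+b)(e_3\wedge e_1)\Bigr)=\sin(at+b)\,e_2(t)-\cos(at+b)\,e_3(t),
\]
so the troublesome factor $1/\mu$ cancels completely. What remains is a product of polyexp functions and hence polyexp, so by the lemma the antiderivative $\int_{t_0}^t n(s)\wedge c'(s)\,ds$ is again an explicitly computable polyexp vector.

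Finally, polyexp functions of the real variable $t$ extend verbatim to entire functions of $z\in\C$, since the building blocks $t^n e^{kt}$ do. Both $c(z)$ and the antiderivative of $n\wedge c'$ are polyexp, so the right hand side of \eqref{bj} is an entire function of $z$, and its real part gives a minimal surface defined on the whole of $\C$. The only nontrivial step is the cancellation of $\mu$ in the cross product; without the \so hypothesis $1/\mu$ would survive and in general destroy the polyexp structure, so that cancellation is really the heart of the proof.
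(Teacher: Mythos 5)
Your proof is correct and takes essentially the same route as the paper: the paper's entire proof is the one-line observation that the factor $\mu$ cancels in the integrand $c'\wedge n$, which is exactly the cancellation you identify as the heart of the argument and verify explicitly via the identities $e_2\wedge e_1=-\mu e_3$ and $e_3\wedge e_1=\mu e_2$. Your additional checks (that $\{c,n\}$ is admissible \bj data and that polyexp functions extend to entire functions) are details the paper leaves implicit.
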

\begin{proof}
Note that in the \bj formula, the integrand $c' \wedge n$ is polyexp because the factor $\mu$ cancels.
\end{proof}

\section{The Quaternion Method}

To apply the method from the previous section, we need to produce examples of polyexp curves in $\R\cdot \so$. Our first approach utilizes quaternions.

Let $\h$ denote the real algebra of quaternions that we write as usual as $q_1{\bf 1}+q_2{\bf i}+q_3{\bf j}+q_4{\bf k}$, where $\{{\bf 1},{\bf i}, {\bf j}, {\bf k}\}$ is the canonical basis of $\h$ and $(q_1,q_2,q_3,q_4)\in\R^4$. 

For non-zero $q\in\h$, the linear map

\begin{align*}
\rho_q:\h & \rightarrow\h \\
\rho_q(v)&\mapsto qv \bar{q}
\end{align*}

acts on the imaginary quaternions as an element   $\Phi(q)\in \R\cdot \so$. Explicitly, 
\[
\Phi(q_1,q_2,q_3,q_4) = 
\begin{pmatrix}
q_1^2+q_2^2-q_3^2-q_4^2 & 2 q_2 q_3-2 q_1 q_4 & 2 q_1 q_3+2 q_2 q_4 \\
 2 q_1 q_4+2 q_2 q_3 & q_1^2-q_2^2+q_3^2-q_4^2 & 2 q_3 q_4-2 q_1 q_2 \\
 2 q_2 q_4-2 q_1 q_3 & 2 q_1 q_2+2 q_3 q_4 & q_1^2-q_2^2-q_3^2+q_4^2 \\
\end{pmatrix} \ .
\]

As a consequence we have

\begin{lemma}
Let $q(t)$ be a polyexp curve in $\R^4$. Let $\mu(t) = |q(t)|^2$. Then both $\mu$ and $\Phi(q)$ are polyexp, and $\frac1\mu \Phi(q)\in\so$. In particular, Theorem \ref{thm:key} applies.
\end{lemma}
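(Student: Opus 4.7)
The plan is to dispatch the three assertions — $\mu$ polyexp, $\Phi(q)$ polyexp, and $\frac1\mu\Phi(q)\in\so$ — separately, each with a single tool.

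For the first two I would simply invoke the preceding closure lemma. Since the components $q_1(t),\ldots,q_4(t)$ of $q(t)$ are polyexp by hypothesis, and polyexp is closed under sums and products, $\mu=q_1^2+q_2^2+q_3^2+q_4^2$ is polyexp. The same reasoning applies entry by entry to the displayed matrix $\Phi(q_1,q_2,q_3,q_4)$: each of its nine entries is a sum of products of pairs of the $q_i$, hence polyexp.

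For the third assertion I would argue at the level of the quaternion action $\rho_q(v)=qv\bar q$ rather than through the matrix. The identity $\overline{qv\bar q}=q\bar v\bar q$ shows that $\rho_q$ preserves the imaginary quaternions, identified with $\R^3$, and multiplicativity of the quaternion norm gives $|\rho_q(v)|=|q|^2|v|=\mu(t)|v|$; hence $\frac1\mu\rho_q$ is a linear isometry of $\R^3$ at every $t$ with $\mu(t)\neq 0$. To upgrade \emph{orthogonal} to \emph{special orthogonal}, I would use a connectedness argument: $\h\setminus\{0\}$ is path-connected, at $q={\bf 1}$ the normalized action is the identity, and the determinant of $\frac1\mu\Phi(q)$ is a continuous $\{\pm 1\}$-valued function of $q$, hence identically $+1$.

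The only delicate point — essentially the sole obstacle — is that Theorem~\ref{thm:key} requires $\mu$ to be nonvanishing, so the lemma is implicitly restricted to quaternion curves $q(t)$ that avoid $0\in\h$. This must be arranged by hand in the choice of $q$; in the examples of the next section it will typically be secured by including a nonzero constant summand, ensuring that $\mu$ is bounded below by a positive constant on $\R$.
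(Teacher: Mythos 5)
Your proof is correct and coincides with the standard argument the paper leaves implicit: the lemma is stated there without proof, as an immediate consequence of the displayed quadratic formula for $\Phi(q)$ (each entry a sum of products of the polyexp components $q_i$) and the fact, already asserted in the text, that $v\mapsto qv\bar q$ acts on the imaginary quaternions as a rotation scaled by $|q|^2$. Your closing caveat is worth keeping: the assertion that Theorem~\ref{thm:key} applies tacitly requires $q(t)\neq 0$ for all real $t$ so that $\mu$ is nonvanishing, a hypothesis the paper omits from the lemma's statement but arranges in each of its examples.
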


This lemma allows to find  algebraically simple explicit \bj surfaces with arbitrarily fast rotating normal. We conclude this section with examples.


\subsection{Circular Helicoids}\label{sec:circular}
For our first example, let $q(t)=(\cos(t/2),0,0,-\sin(t/2))$ be  a great circle in $\s^3$. Let $Q(t) = \Phi(q(t))$.

Then 
\[
Q(t)=
\begin{pmatrix}
 \cos (t) & \sin (t) & 0 \\
 -\sin (t) & \cos (t) & 0 \\
 0 & 0 & 1 \\
\end{pmatrix}
\ .
\]

 Integrating the first column gives the core curve
 \[
 c(t) = \begin{pmatrix}
 \sin (t) \\
 \cos (t) \\
 0 \\ \end{pmatrix} \ ,
\]
a circle in the $xy$-plane.
The rotating normal is given as a linear combination of the second and third column as 
\[
n(t) = \cos(at+b)
\begin{pmatrix}
 \sin (t) \\
 \cos (t) \\
 0 \\
\end{pmatrix}
+
\sin(at+b)
\begin{pmatrix}
  0 \\
 0 \\
 1 \\
  \end{pmatrix} \ .
\]
We can assume that $b=0$ as other choices of $b$ will only rotate the surface about the $z$-axis, unless $a=0$, in which case the surface will be  a plane or catenoid.
These are the \bj data of the bent helicoids studied in \cite{mewe1}. Note that when $a\in \Z+\frac12$, the surface is non-orientable. The case of $a=\frac12$ is Meeks' minimal \mo strip (\cite{me}).
See Figure \ref{fig:circle72} when $a=7/2$. In Figure \ref{fig:circle4} the choice is $a=2$.

The \we data of these surfaces are given by
\begin{align*}
G(z) &=(\sin (z)+i \cos (z)) \frac{\cos (a z) }{1-\sin (a z)}\\
dh &= i \cos(az) \, dz \ .
\end{align*}
After the substitution $w=e^{i z}$ this becomes

\begin{align*}
G(w) &=\frac1w  \frac{w^a+i}{w^a-i} \\
dh &= \frac12\frac{w^{2a}+1}{w^{a+1}} \, dw \ .
\end{align*}
This shows that the surface is (for $a$ a positive integer) defined on $\C^*$, is regular, the Gauss map has degree $a+1$, and hence the surface has finite total curvature $-4\pi (a+1)$.


\def\fw{2.7in}
\begin{figure}[H]
	\centering
	\begin{subfigure}[t]{\fw}
  		\centering
		\includegraphics[width=\fw]{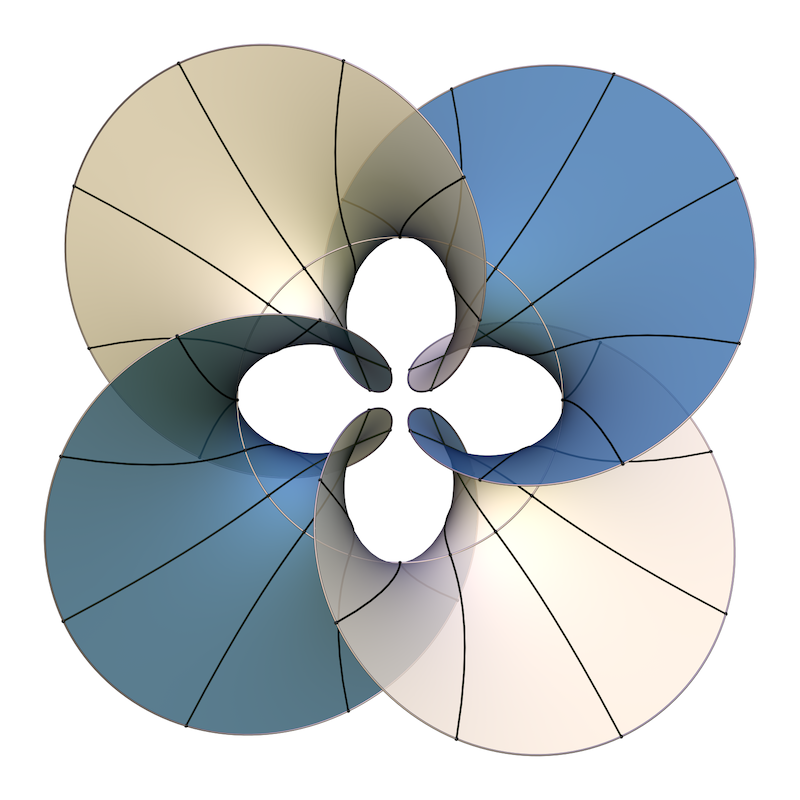}
  		\caption{$a=2$}
		\label{fig:circle4}
	\end{subfigure}
	\quad
	\begin{subfigure}[t]{\fw}
  		\centering
		\includegraphics[width=\fw]{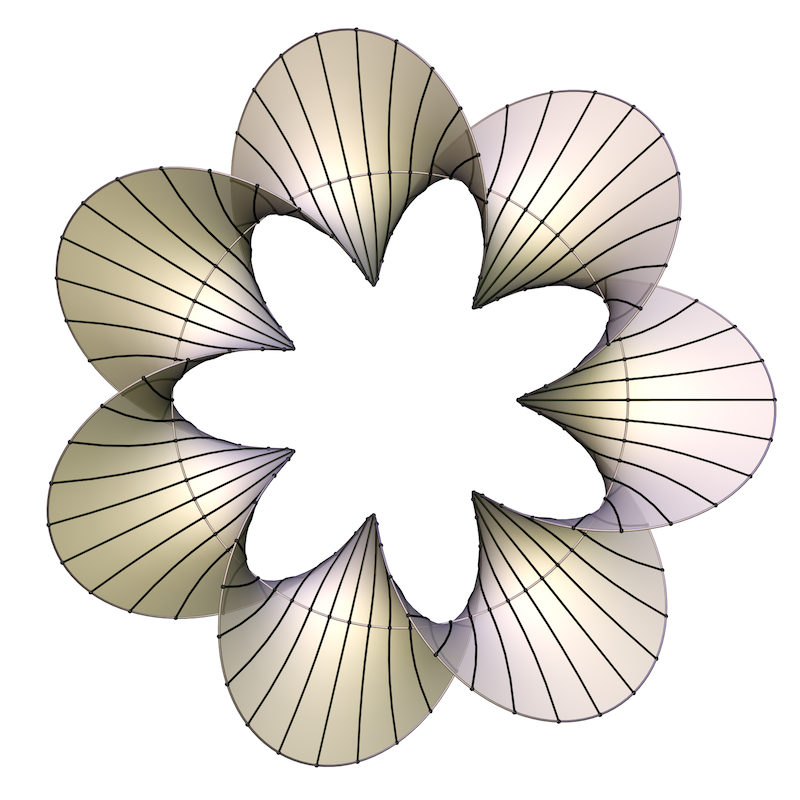}
  		\caption{$a=7/2$}
		\label{fig:circle72}
	\end{subfigure}
 	 \caption{\bj surfaces with circular core.}
 	 \label{fig:circle}
\end{figure}

%
%

\subsection{Torus Knots}

As a second simple example, we apply  the quaternion method to torus knots  in $\R^4$. Let 
\[
q_0(t) = \cos(At){\bf 1}+\cos(Bt){\bf i}+ \sin(B t){\bf j}+ \sin(A t){\bf k} \ ,
\]
and define
\[
q(t) = \frac12 q_0(t)\cdot ({\bf 1}+{\bf i}+{\bf j}+{\bf k})
\]
in order to move $q_0$ away from a standard position and to eventually simplify the \we representation of the minimal surfaces we obtain. Then
\[
Q(t) = \begin{pmatrix}
  \sin (2 B t)-\sin (2 A t) & 2 \sin ((A+B) t) & \cos (2 A t)+\cos (2 B t) \\
 \cos (2 A t)-\cos (2 B t) & -2 \cos ((A+B) t) & \sin (2 A t)+\sin (2 B t) \\
 2 \cos ((A-B) t) & 0 & 2 \sin ((A-B) t) \\
  \end{pmatrix}
 \]
 Integrating the first column gives the space curve
 \[
 c(t) =  \frac{1}{2}\begin{pmatrix}
\frac{\cos (2 A t)}{A}-\frac{\cos (2 B t)}{B} \\
  \frac{\sin (2 A t)}{A}-\frac{\sin (2 B t)}{B} \\
 \frac{4 \sin ((A-B) t)}{A-B} \\
 \end{pmatrix} \ .
\]
Observe that while the curve we begin with is a torus knot, the resulting space curve has no reason to be knotted.
The rotating normal is given as the normalized linear combination of the second and third column as 
\[
n(t) =  \cos(at+b)
\begin{pmatrix}
  \sin ((A+B) t) \\
 -\cos ((A+B) t) \\
 0 \\
  \end{pmatrix}
+\frac12
\sin(at+b)
\begin{pmatrix}
 \cos (2 A t)+\cos (2 B t) \\
 \sin (2 A t)+\sin (2 B t) \\
 2 \sin ((A-B) t) \\
  \end{pmatrix} \ .
\]

In the simplest case for $a=0$ and $b=0$ the resulting \bj surfaces are generalized Enneper surfaces. For instance, for $A=-\frac12$ and $B=\frac32$, we obtain the standard Enneper surface as shown in Figure \ref{fig:enneper0}. The ``hole'' in the center will eventually close. If we rotate the normal by $90^\circ$ by letting $b=\pi/2$, we obtain the surface in Figure \ref{fig:enneper1} with two ends.

\def\fw{2.7in}
\begin{figure}[H]
	\centering
	\begin{subfigure}[t]{\fw}
  		\centering
		\includegraphics[width=\fw]{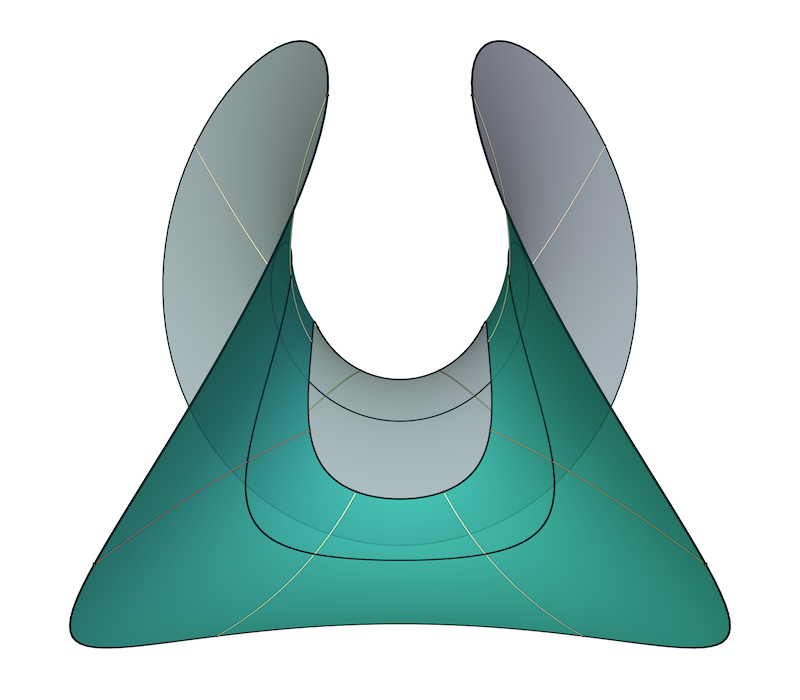}
  		\caption{$b=0$}
		\label{fig:enneper0}
	\end{subfigure}
	\quad
	\begin{subfigure}[t]{\fw}
  		\centering
		\includegraphics[width=\fw]{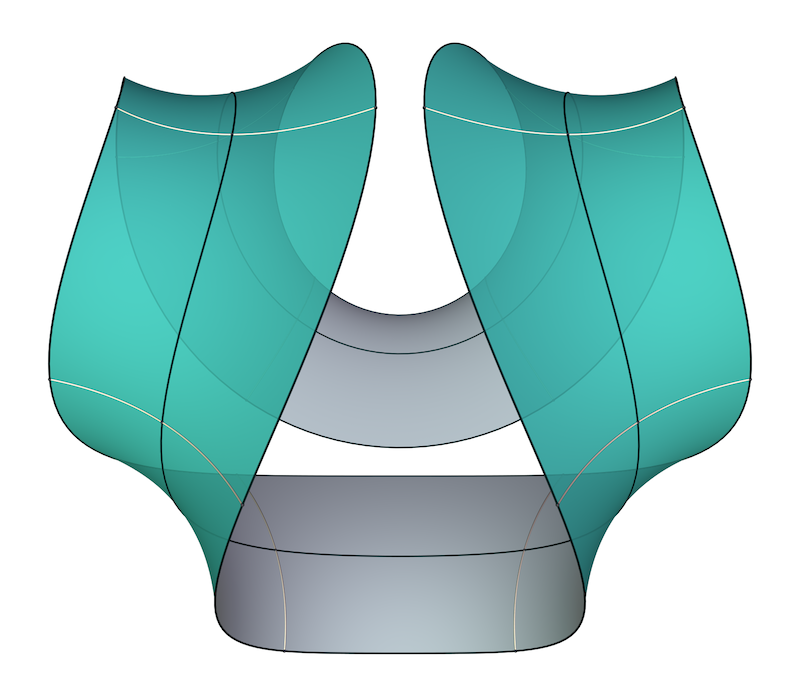}
  		\caption{$b=\pi/2$}
		\label{fig:enneper1}
	\end{subfigure}
 	 \caption{\bj surfaces related to Enneper's surface}
 	 \label{fig:enneper}
\end{figure}

Increasing $a$ creates helicoidal surfaces along the core curve, as in Figure \ref{fig:enneper20} for $a=20$.
Enneper surfaces with $k$-fold dihedral symmetry can be obtained by using $A=-\frac12$ and $B=\frac12(2k-1)$. The planar Enneper surfaces with $k$-fold dihedral symmetry (\cite{ka5})
arise if we choose  $A=+1/2$ and  $B=\frac12(2k+1)$. Examples with 3-fold dihedral symmetry and no twist are shown in Figure \ref{fig:planarenneper}, and a  version with the same core curve but faster rotating normal appears in Figure \ref{fig:PlanarEnneper50}.

\def\fw{2.7in}
\begin{figure}[H]
	\centering
	\begin{subfigure}[t]{\fw}
  		\centering
		\includegraphics[width=\fw]{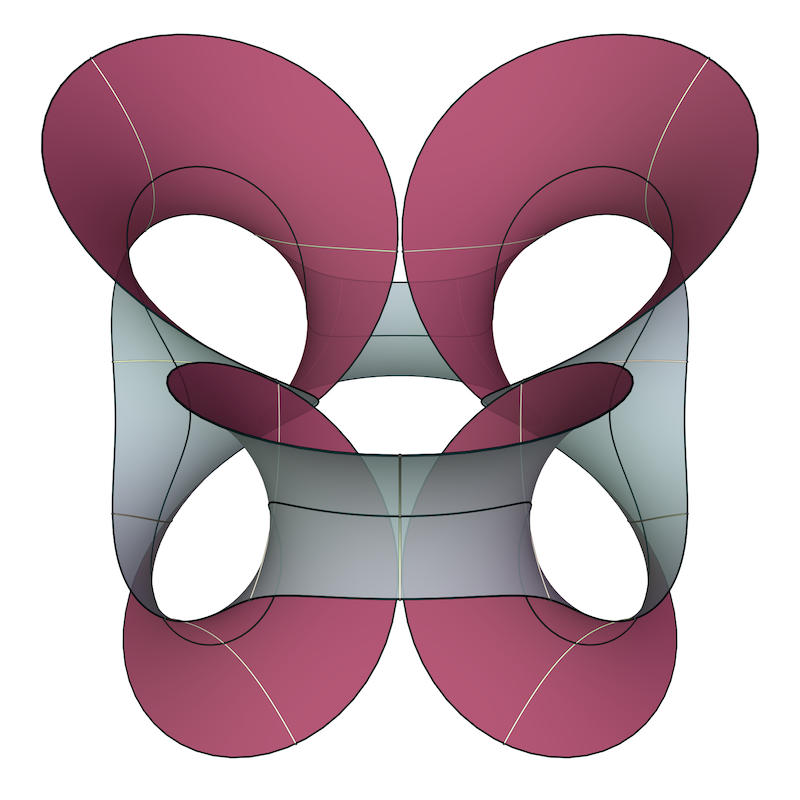}
  		\caption{$b=0$}
		\label{fig:planarenneper0}
	\end{subfigure}
	\quad
	\begin{subfigure}[t]{\fw}
  		\centering
		\includegraphics[width=\fw]{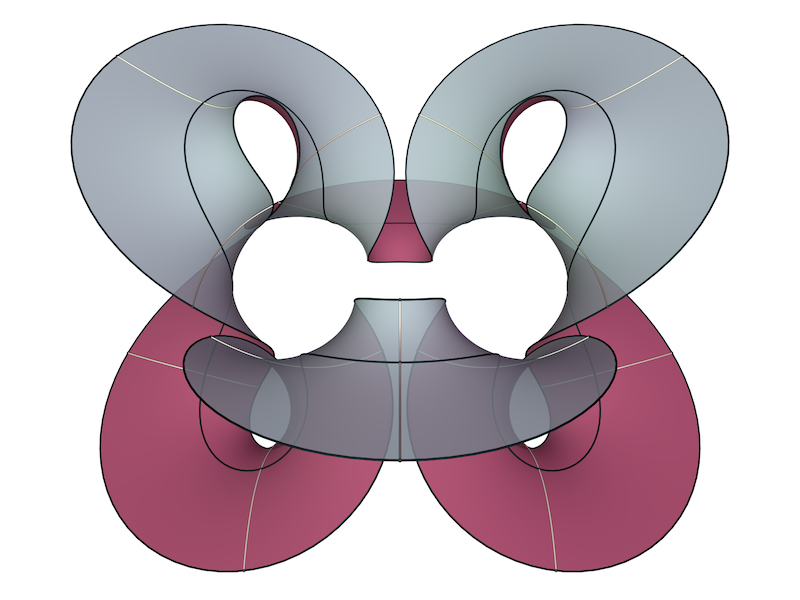}
  		\caption{$b=\pi/2$}
		\label{fig:planarenneper1}
	\end{subfigure}
 	 \caption{\bj surfaces related to planar Enneper's surface}
 	 \label{fig:planarenneper}
\end{figure}

For other (rational) choices of $A$ and $B$, the surfaces will be immersed with two ends, regular, and of finite total curvature. To see this, we compute from the unintegrated \bj formula the Gauss map and height differential (using Equation \ref{eqn:wei}) in the coordinate $w$ given by $z=e^{i w}$.

\begin{align*}
G(w) &= -\frac{i w^{A+B} \left(e^{i b} w^{a+A}+e^{i b} w^{a+B}+w^A-w^B\right)}{e^{i b} w^{a+A}-e^{i b}
   w^{a+B}+w^A+w^B}\\
dh &=\frac{i}{2}  \frac{w^{2 B} \left(e^{i b} w^a-1\right)^2-w^{2 A} \left(e^{i
   b} w^a+1\right)^2}
   {e^{i b} w^{a+A+B+1} }  \, dw \ .
\end{align*}
If $a=b=0$ this simplifies to 
\begin{align*}
G(w) &= -i w^{A+B}\\
dh &=-2 i w^{A-B-1}\, dw \ .
\end{align*}
which are the \we data of the generalized Enneper surfaces (see \cite{ka5}), as claimed.

For $a>0$ a positive integer, we limit the regularity discussion to the standard Enneper case when $A=-\frac12$,  $B=\frac32$ and $b=0$ in order to keep the formulas simple. Let

\begin{align*}
P(w) &=i w \left(w^{a+2}+w^a-w^2+1\right)\\
Q(w)&= w^{a+2}-w^a-w^2-1  \ . \\
\end{align*}

Then 
\begin{align*}
G(w) &= \frac{P(w)}{Q(w)}\\
dh &=\frac1{2w^{a+4}}P(w)Q(w)  \, dw \ .
\end{align*}

To see that the surface is regular in all of $\C^*$, we need to show  that the conformal factor does not vanish and has no singularities. This is equivalent to  $P$ and $Q$ having no common roots.  Suppose that $w$ is a common root of $P$ and $Q$. Then also
$0=P(w)-i w Q(w) = 2i w(w^a+1)$. Thus $w=0$ or $w^a+1=0$. But $Q(0)=1$ and if $w^a=-1$, then $Q(w) = -2w^2$. This implies that in $\C^*$ the height differential has a zero if and only if the Gauss map has a zero or pole of matching order. This in turn implies that the surface is regular.

We also see that the Gauss map has degree $a+3$ when $a>0$. This degree drops to 1 if $a=0$ due to cancellations.

\def\fw{2.7in}
\begin{figure}[H]
	\centering
	\begin{subfigure}[t]{\fw}
  		\centering
		\includegraphics[width=\fw]{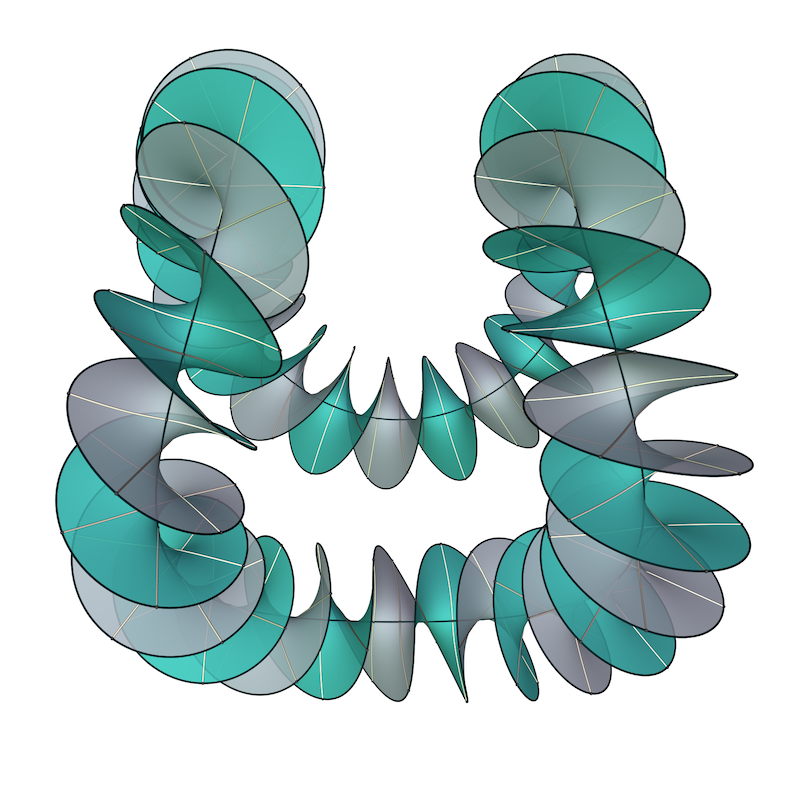}
  		\caption{$A=-\frac12$, $B=\frac32$ and $a=20$}
		\label{fig:enneper20}
	\end{subfigure}
	\quad
	\begin{subfigure}[t]{\fw}
  		\centering
		\includegraphics[width=\fw]{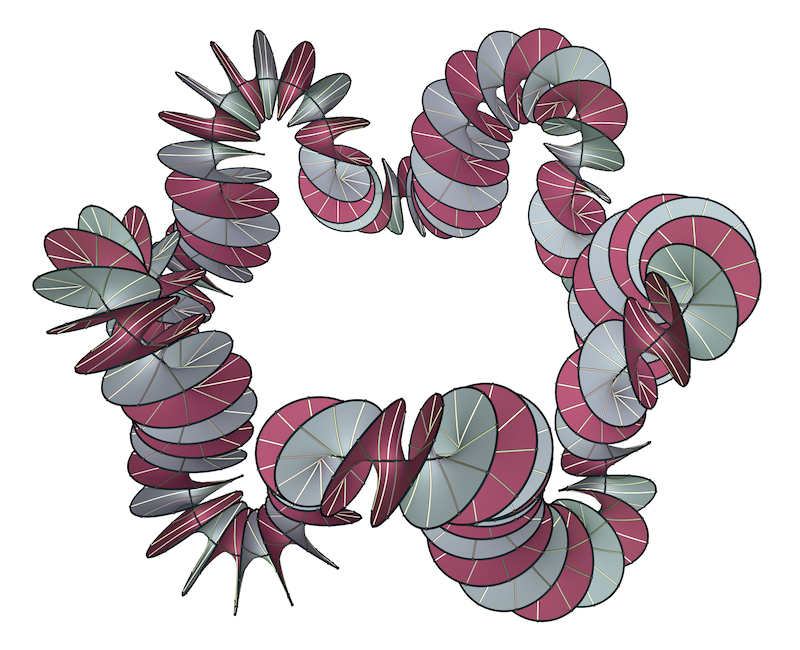}
  		\caption{$A=+\frac12$, $B=\frac72$ and $a=50$}
		\label{fig:PlanarEnneper50}
	\end{subfigure}
 	 \caption{\bj surfaces based on Enneper core curves}
 	 \label{fig:twistenneper}
\end{figure}

\subsection{Periodic Surfaces}\label{sec:snake}

So far, the core curves of the examples we have considered have been closed curves. This is in general not the case. As an example, we consider the entry curve $q(t)$ given as the quaternion product of two great circles of $\s^3$. Let 
\begin{align*}
q_1(t) &=\cos(t/2){\bf j} + \sin(t/2){\bf k}  \\
q_2(t) &=-\cos(t/2){\bf 1} + \sin(t/2) {\bf k} 
\end{align*}
Then

\begin{align*}
q(t) &= q_1(t)\cdot q_2(t) \\
&= -\sin(t/2)^2 \, {\bf 1} +\frac12 \sin(t)\,  {\bf i} - \cos(t/2)^2\,  {\bf j} -\frac12 \sin(t) \, {\bf k}
\end{align*}

and 

\[
Q(t)=
\begin{pmatrix}
 -\cos (t) & -\sin (t) & 0 \\
 -\cos (t) \sin (t) & \cos ^2(t) & \sin (t) \\
 -\sin ^2(t) & \cos (t) \sin (t) & -\cos (t) \\
\end{pmatrix}
\]
so that the  core curve becomes $c(t)=\left(-\sin (t),\frac12 \cos ^2(t),\frac{1}{4} \sin (2 t)-\frac{t}{2}\right)$ and the rotating normal is given by
\[
n(t) =  \cos(at+b)
\begin{pmatrix}
  -\sin ( t) \\
 \cos^2 ( t) \\
 \cos(t)\sin(t) \\
  \end{pmatrix}
+
\sin(at+b)
\begin{pmatrix}
 0 \\
\sin(t) \\
-\cos(t) \\
\end{pmatrix} \ .
\]
Note that this curve is periodic in the $z$-direction and projects onto the $xy$ plane as a singular piece of the parabola $y=2(1-x^2)$. We will come back to this example from a different point of view in Section \ref{sec:lissajous}.

Using the coordinate $w$  on $\C^*$ with $z=e^{i w}$ we obtain as the \we representation of the surface divided by its translational symmetry
\begin{align*}
G(w) &= \frac{P(w)}{Q(w)}\\
dh &=\frac{e^{-i b}}{8w^{a+3}}  \, dw
\end{align*}
with
\begin{align*}
P(w) &= e^{i b} \left(w^2+1\right) w^a-i (w+2) w+i\\
Q(w) &= e^{i b} ((w-2) w-1) w^a-i \left(w^2+1\right) 
\end{align*}

In general, the degree of the Gauss map is $a+2$ except when $a=0$ and $b=\pi/2$ (see Figure \ref{fig:TransEnneper}), when the degree is 1. In this case, the \we representation simplifies to
\begin{align*}
G(w) &= \frac{w-1}{w+1}\\
dh &=\frac{i}{2w^3} (w^2-1)  \, dw
\end{align*}
with  an annular Scherk end at 0 and a higher order end at $\infty$.
In any case, the surfaces are regular everywhere in $\C^*$. This can for instance be seen by computing the resultant of $P(w)$ and $Q(w)$ as
\[
\Res(P,Q) = -8^{a+1}i^a e^{(a+2) i b}
\]
which never vanishes.

\def\fw{2.8in}
\begin{figure}[H]
	\centering
	\begin{subfigure}[t]{\fw}
  		\centering
		\includegraphics[width=\fw]{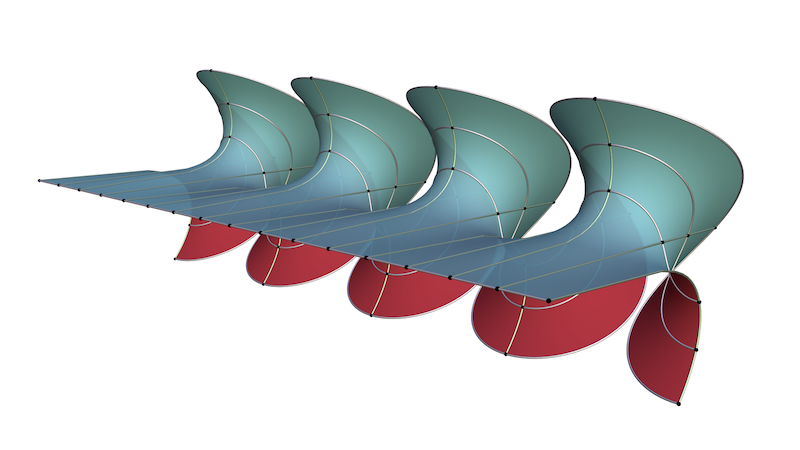}
  		\caption{$a=0$, $b=\frac\pi2$}
		\label{fig:TransEnneper}
	\end{subfigure}
	\begin{subfigure}[t]{\fw}
  		\centering
		\includegraphics[width=\fw]{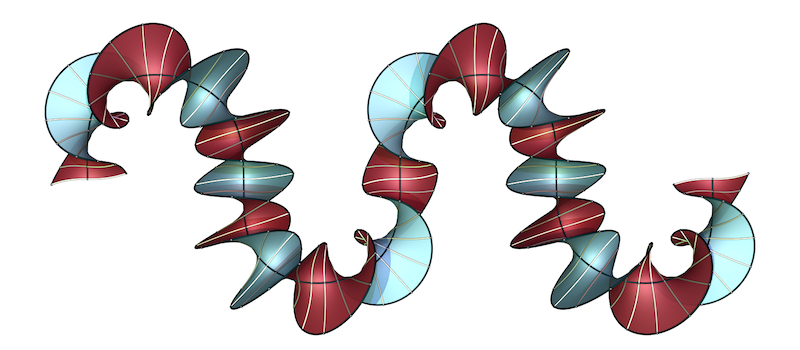}
  		\caption{$a=5$, $b=0$}
		\label{fig:snake}
	\end{subfigure}
 	 \caption{Periodic \bj surfaces based on the quaternion product of two great circles}
 	 \label{fig:periodic}
\end{figure}

%
%
%
%
%
%

\section{Lifting Plane Curves}

The major drawback of the the quaternion method is that it gives little control over the geometry of the core curve. In the translation invariant examples that we created with the quaternion method we noticed that the core curve often had a simple projection onto the plane perpendicular to the translation. This suggested the question whether one could prescribe a planar curve $(x(t), y(t))$ and lift it to a space curve $c(t)=(x(t), y(t),z(t))$
such that $c'(t)$ is the first column of a matrix $Q(t)=\Phi(q(t))$ for a suitable curve $q(t)\in \R^4$. Of course all this should happen in the realm of polyexp functions. 

To our delight, this is indeed possible. Moreover, the matrices $\Phi(q(t))$ we obtained this way turned out to be rather special elements of $\R\cdot \so$, namely $180^\circ$ rotations followed by scalings. While this does not achieve full generality, it allows for a very simple description and a highly effective method.

More precisely, we have:

\begin{theorem}\label{thm:lift}
Given a planar polyexp curve $(x(t),y(t))$,  there is family of  explicit polyexp curves 
\[
\Psi_\lambda(t) \in \R\cdot  \so
\]
depending on a parameter $\lambda \in\R$,
such that the projection of the integral 
$c(t)$ of the first column of $\Psi_\lambda(t) $ onto the $xy$-plane is the curve $(x(t),y(t))$.
\end{theorem}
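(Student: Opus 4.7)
The plan is to exhibit a polyexp quaternion $q(t) \in \R^4$ depending on a parameter $\lambda$ and set $\Psi_\lambda(t) = \Phi(q(t))$; by the discussion of Section 3 this then automatically lies in $\R\cdot\so$. Its first column is
\[
\bigl(q_1^2 + q_2^2 - q_3^2 - q_4^2,\; 2(q_1 q_4 + q_2 q_3),\; 2(q_2 q_4 - q_1 q_3)\bigr),
\]
so the requirement that $\int c'$ project onto the given planar curve becomes the pair of scalar equations
\[
q_1^2 + q_2^2 - q_3^2 - q_4^2 = x'(t), \qquad 2(q_1 q_4 + q_2 q_3) = y'(t).
\]
Four unknowns and two equations should yield a two-parameter family of polyexp solutions, one parameter being the choice of lift $z$ and the other the announced $\lambda$.

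The one conceptual step is to find an ansatz that decouples the equations without introducing square roots. I would observe that both quadratic expressions diagonalize in the shifted variables $q_1 \pm q_3$ and $q_2 \pm q_4$:
\[
q_1^2 + q_2^2 - q_3^2 - q_4^2 = (q_1+q_3)(q_1-q_3) + (q_2+q_4)(q_2-q_4),
\]
\[
2(q_1 q_4 + q_2 q_3) = (q_1+q_3)(q_2+q_4) - (q_1-q_3)(q_2-q_4).
\]
I would then impose $q_2 = q_4$ (killing the $q_2-q_4$ factor in both equations) together with $q_1 + q_3 = 2\lambda$ for $\lambda \in \R\setminus\{0\}$. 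The system decouples instantly: the first equation gives $q_1 - q_3 = x'(t)/(2\lambda)$, and the second gives $q_2 = q_4 = y'(t)/(4\lambda)$. Solving,
\[
q_1 = \lambda + \frac{x'(t)}{4\lambda}, \quad q_2 = q_4 = \frac{y'(t)}{4\lambda}, \quad q_3 = \lambda - \frac{x'(t)}{4\lambda},
\]
each manifestly polyexp in $t$.

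The last step is routine verification. Setting $\Psi_\lambda(t) = \Phi(q(t))$ gives a polyexp curve in $\R\cdot\so$, direct substitution confirms that its first column has $x'(t)$ and $y'(t)$ in the first two slots, and the third slot comes out to the polyexp function
\[
z'(t) = 2(q_2 q_4 - q_1 q_3) = \frac{x'(t)^2 + y'(t)^2}{8\lambda^2} - 2\lambda^2,
\]
whose primitive is the $z$-coordinate of the lift $c(t)$.

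I do not anticipate a substantive obstacle: the only insight required is the diagonalizing change of variables, after which the solution is purely algebraic with no square roots or other non-polyexp operations. The one subtle point, while hunting for the ansatz, is to resist restrictions such as $q_1 \equiv 0$ (corresponding geometrically to a pure $180^\circ$ rotation, as hinted at in the preceding paragraph of the paper): such a restriction forces a perfect-square condition of the shape $q_2^2 - y'^2/(4 q_2^2) - x' = q_4^2$ that cannot in general be met within the polyexp class, and letting $q_1 \neq 0$ is exactly what makes the construction universal.
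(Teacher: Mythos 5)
Your proposal is correct, but it reaches the result by a genuinely different route than the paper. The paper does no equation-solving at all: it takes the purely imaginary quaternion $v = x'(t)\,{\bf i} + y'(t)\,{\bf j} + \lambda\,{\bf k}$, whose image under $\Phi$ is the symmetric matrix $R(v) = 2vv^t - |v|^2 I_3$ (a $180^\circ$ rotation scaled by $|v|^2$), divides by $2\lambda$, and reads off the \emph{third} column, which is $\left(x',\, y',\, \frac{1}{2\lambda}(\lambda^2 - x'^2 - y'^2)\right)^t$ on the nose. You instead take the phrase ``first column'' in the statement literally, which forces you to solve the two quadratic equations; your diagonalizing substitution in the variables $q_1 \pm q_3$, $q_2 \pm q_4$ does this cleanly and produces a quaternion with nonvanishing real part, hence a general scaled rotation rather than a scaled involution. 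Both constructions are valid, and your lift $z'(t) = \frac{x'^2+y'^2}{8\lambda^2} - 2\lambda^2$ is a legitimate (different) one-parameter family. What the paper's choice buys is that the symmetric matrix makes the remaining two columns $n_1, n_2$ --- used immediately afterwards to build the rotating normal and prove the regularity theorem --- very simple; that is why the authors tolerate the mismatch between ``first column'' in the statement and ``third column'' in their proof. What yours buys is a proof of the statement exactly as written, and your closing observation that insisting on $q_1 \equiv 0$ \emph{together with} the first column forces a non-polyexp square root correctly identifies why the paper had to switch columns. One small addition you should make: to invoke Theorem \ref{thm:key} one needs the scaling factor $\mu(t) = |q(t)|^2$ to be nonvanishing; for your $q$ one computes $\mu = 2\lambda^2 + (x'^2+y'^2)/(8\lambda^2) \geq 2\lambda^2 > 0$ for $\lambda \neq 0$, so this is automatic, but it deserves a line.
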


\begin{proof}

Recall that for a  (column) vector $v\in \R^n$, the symmetric matrix 
\[
R(v) = 2v\cdot v^t - v^t v I_n
\]
is the $180^\circ$ degree rotation about the line in the direction of $v$, followed by a scaling by $|v|^2$. Explicitly, in $\R^3$, for $v=(x,y,z)$, 
\[
R(x,y,z) = \Phi(x{\bf i} + y {\bf j} + z {\bf k}) = \begin{pmatrix}
 x^2-y^2-z^2 & 2 x y & 2 x z \\
 2 x y & -x^2+y^2-z^2 & 2 y z \\
 2 x z & 2 y z & -x^2-y^2+z^2 \\
\end{pmatrix}
\]

In particular, $R(x,y,z) \in \R \so$, and when $x$, $y$, and $z$ are polyexp in $t$, then so is  $R(x,y,z)$. Note that this matrix is also symmetric, which implies that it represents a $180^\circ$ rotation followed by a scaling.

Now let a polyexp planar curve $(x(t), y(t))$ be given, and fix  a constant $\lambda\in \R$. Define the polyexp curve
\[
\Psi_\lambda(t) = \frac1{2\lambda} R(x'(t), y'(t),\lambda) \ .
\] 
Then $\Psi_\lambda(t)$ satisfies the assumptions of Theorem \ref{thm:key} with 
\[
\mu(t) = \frac1{2\lambda} \left( \lambda^2+x'(t)^2+y'(t)^2\right) \ ,
\]
and hence its columns can be used to find an explicit \bj surface. 
We intentionally choose the {\em third} column of $\Psi$ as $c'$, namely
\[
c'(t) = 
\begin{pmatrix}
x'(t) \\
y'(t) \\
 \frac1{2 \lambda }\left( \lambda ^2-x'(t)^2-y'(t)^2\right)\\
\end{pmatrix} \ .
\]
This is dictated by the desire to lift a curve in the $xy$-plane and to use symmetric matrices.
With the appropriate integration constants,  the space curve $c$ then projects onto the $xy$-plane as the given curve $(x(t), y(t))$. 
\end{proof}

\begin{remark}
We note that for a {\em closed} planar curve $(x(t), y(t))$ defined on an interval $[t_0, t_1]$, the  constructed lifts will in general not be closed but  periodic with a translational period in the $z$-direction given by
\[
T = \frac12\lambda (t_1-t_0) -  \frac1{2\lambda}\int_{t_0}^{t_1}  x'(t)^2+y'(t)^2  \, dt \ .
\]
This shows, however, that for a suitable choice of  $\lambda$, we can {\em always} obtain closed lifts.
\end{remark}

\begin{remark}\label{rem:other}
One can carry out this construction also for non-constant $\lambda$ as long as the function $ \left( x'(t)^2+y'(t)^2\right)/\lambda(t)$ is polyexp. We will see an example in Section \ref{sec:lissajous}.
\end{remark}

In order to define a rotating normal along $c$, let
\begin{align*}
n_1(t) &= \frac1{2\lambda} \begin{pmatrix}
-\lambda^2+x'(t)^2-y'(t)^2\\
2{x'(t) y'(t)}\\
 2\lambda x'(t) \\
\end{pmatrix}\\
n_2(t) &= \frac1{2\lambda} \
\begin{pmatrix}
2x'(t) y'(t)\\
-\lambda ^2-x'(t)^2+y'(t)^2 \\
2\lambda  y'(t) \\\end{pmatrix}
\end{align*}
be  the first two columns of $\Psi_\lambda(t)$. Then define for real parameters $a$ and $b$ the normal
\[
n(t) =\frac{2\lambda}{\lambda^2+x'(t)^2+y'(t)^2}\left(
\cos(at + b)
n_1(t)
+\sin(at+b)
n_2(t)
\right) \ .
\]

We  use the pair $\{c,n\}$ as \bj data. As both $c(t)$ and $n(t)$ are polyexp, the corresponding \bj surface will be explicit.

We will show next that the surfaces constructed this way are almost always regular. 

\begin{theorem}
Let $(x(t), y(t))$ be a polyexp plane curve,  let $c(t)$ be the polyexp curve in $\R^3$ constructed in Theorem \ref{thm:lift}, and $n(t)$ 
the normal defined above. Assume that $\cos(at+b) x'(t) + \sin(at+b) y'(t)$ is {\em not} identical equal to 0; this will be true for all but at most one choice of real numbers $(a,b)$.
Then the \bj surface given by these data is defined in the entire complex plane and  regular for a generic choice of $\lambda$.
\end{theorem}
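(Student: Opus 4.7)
The plan is to compute the \we data $G$ and $dh$ associated to the \bj pair $\{c,n\}$ directly from \eqref{eqn:wei}, and then to translate the condition for the conformal factor to be finite and non-zero into a statement about common zeros of two polyexp functions. Introducing the polyexp functions $u = x'+iy'$ and $\bar u = x'-iy'$ (names, not complex conjugation) and $\alpha(z)=az+b$, the columns of $\Psi_\lambda$ given in the proof of Theorem \ref{thm:lift} lead after a direct expansion to the factored formulas
\[
dh = -\frac{1}{2\lambda}\bigl(u+\lambda e^{i\alpha}\bigr)\bigl(\bar u-\lambda e^{-i\alpha}\bigr)\,dz,
\qquad
G = \frac{e^{-i\alpha}\bigl(u+\lambda e^{i\alpha}\bigr)}{\bar u-\lambda e^{-i\alpha}}.
\]
Since $c$ and $n$ are polyexp, the integrand of \eqref{bj} is polyexp (the denominators $\mu$ cancel as in Theorem \ref{thm:key}) and its antiderivative extends to all of $\C$, so the \bj surface is globally defined on the complex plane.

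Writing $f = u+\lambda e^{i\alpha}$ and $g = \bar u -\lambda e^{-i\alpha}$, both entire, and noting that $e^{\pm i\alpha}$ is nowhere zero, we see that the zeros of $G$ are exactly the zeros of $f$, the poles of $G$ are exactly the zeros of $g$, and $dh$ vanishes precisely on the zero set of $fg$, always with matching multiplicities. A short order count shows that the conformal factor $|dX| = \tfrac12(|G|+1/|G|)|dh|$ is nonzero and finite at every point of $\C$ except at a common zero $z_0$ of $f$ and $g$, where $|dX|\sim |z-z_0|^{2\min(\mathrm{ord}_{z_0}f,\,\mathrm{ord}_{z_0}g)}$ vanishes. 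Hence regularity on all of $\C$ is equivalent to: \emph{$f$ and $g$ share no zero in $\C$.}

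Now suppose $f(z_0) = g(z_0) = 0$, i.e., $u(z_0) = -\lambda e^{i\alpha(z_0)}$ and $\bar u(z_0) = \lambda e^{-i\alpha(z_0)}$. Averaging and differencing these two identities yields
\[
x'(z_0) = -i\lambda\sin\alpha(z_0), \qquad y'(z_0) = i\lambda\cos\alpha(z_0),
\]
which immediately force $\cos\alpha(z_0)\,x'(z_0) + \sin\alpha(z_0)\,y'(z_0) = 0$. Thus $z_0$ lies in the zero set $D$ of the polyexp function $\kappa(z) := \cos(az+b)\,x'(z) + \sin(az+b)\,y'(z)$, and by hypothesis $\kappa \not\equiv 0$, so $D$ is discrete in $\C$ and in particular at most countable.

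For each $z_0 \in D$ with $(x'(z_0), y'(z_0)) \ne (0,0)$, at least one of $\sin\alpha(z_0), \cos\alpha(z_0)$ is nonzero, and the displayed relations then pin down $\lambda$ uniquely as a complex number $\lambda(z_0)$ (the condition $\kappa(z_0)=0$ ensures the two expressions agree); if instead $(x'(z_0), y'(z_0)) = (0,0)$, then $f(z_0) = \lambda e^{i\alpha(z_0)} \ne 0$ for $\lambda \ne 0$, so such $z_0$ never give a common zero. Consequently the set of real $\lambda$ for which $f$ and $g$ share a zero is contained in the countable set $\{\lambda(z_0) : z_0 \in D\} \cap \R$, and for every real $\lambda$ outside this countable set the surface is regular on all of $\C$. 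The main work is the first step — carefully carrying out the factorizations of $dh$ and $G$ — after which the regularity analysis is essentially algebraic, and the hypothesis on $\kappa$ does exactly the job of forcing the bad $\lambda$-values to fill at most a countable set rather than an interval.
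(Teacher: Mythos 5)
Your proof is correct and follows essentially the same route as the paper: your $f$ and $g$ are, up to nowhere-vanishing prefactors, exactly the paper's $P$ and $Q$, and the reduction of regularity to ``$f$ and $g$ have no common zero,'' followed by observing that a common zero forces $\cos(az+b)x'(z)+\sin(az+b)y'(z)=0$ and pins down $\lambda$, is precisely the paper's argument. You spell out the order-counting for the conformal factor and the countability of the bad set of $\lambda$ a bit more explicitly, but there is no substantive difference.
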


\begin{proof}
We introduce the functions
\begin{align*} 
P(w) &=\frac{i e^{-\frac{1}{2} i (a w+b)}}{\sqrt{2} \sqrt{\lambda }}
 \left(x'(w)+i  y'(w) +\lambda  e^{i (a w+b)}\right)	\\
Q(w) &= \frac{i e^{-\frac{1}{2} i (a w+b)}}{\sqrt{2} \sqrt{\lambda }}
\left(e^{i (a w+b)} \left(x'(w)-i y'(w)\right)-\lambda\right) \ ,
\end{align*}
in which we will express the \we data of the \bj surface.
The unintegrated \bj formula gives us the \we representation
\begin{align*}
c'(w) - i\cdot  c'(w) \wedge n(w) &=
\begin{pmatrix}
 x'(w) \\
 y'(w) \\
\frac1{2 \lambda } \left(\lambda ^2-x'(w)^2-y'(w)^2\right)\\
  \end{pmatrix}+\\
  &\qquad
+  i\cos(aw+b)
  \begin{pmatrix}
 \frac1{\lambda }{x'(w) y'(w)} \\
 -\frac1{2 \lambda }\left(\lambda ^2+x'(w)^2-y'(w)^2\right) \\
 y'(w) \\
 \end{pmatrix}
 \\
 &\qquad
  +i \sin(aw+b)
  \begin{pmatrix}
  \frac1{2 \lambda } \left(\lambda ^2-x'(w)^2+y'(w)^2\right)\\
 -\frac1{\lambda } {x'(w) y'(w)}\\
 -x'(w) \\
   \end{pmatrix}\\
\end{align*}

Solving  Equation (\ref{eqn:wei})  for the Weierstrass data $G$ and $dh$  yields (after a tedious computation) 

\begin{align*} 
G(w) &= \frac{P(w)}{Q(w)}	\\
dh &=  P(w)Q(w) \, dw \ .\\
\end{align*}

Note that $P$ and $Q$ are entire functions. If they do not vanish   simultaneously at a point $w$, then whenever $dh$ vanishes at $w$, $G$ must have a zero or pole of the same order at $w$, which implies that the surface is regular  at $w$. So we need to show that for a generic choice of $\lambda$, $P$ and $Q$ do never vanish simultaneously.

Solving both equations $P(w)=0$ and $Q(w)=0$ for $\lambda$, we obtain
\[
\lambda = -e^{-i (a w+b)} \left(x'(w)+i y'(w)\right) = e^{i (a w+b)} \left(x'(w)-i y'(w)\right)
\]
By the identity theorem, the set of points $w$ where the second of these two equations is satisfied will either be a discrete subset of the complex plane, or the entire complex plane. 
In the first case, we just avoid the discrete set of values where the two expressions agree. In the second case, we note that the second equation is equivalent to
\[
\cos(aw+b) x'(w) + \sin(aw+b) y'(w) = 0 \ ,
\]
which must now hold for all $w$, violating our assumption.
\end{proof}

\begin{remark}\label{rem:singular}
We briefly discuss the condition on $a$ and $b$. In the case that $\cos(aw+b) x'(w) + \sin(aw+b) y'(w) = 0$ holds for all $w$, we necessarily have
\[
\begin{pmatrix}
x'(w)\\ y'(w)
\end{pmatrix} = r(w) \cdot
\begin{pmatrix}
-\sin(aw+b) \\ \cos(a w+b)
\end{pmatrix}
\]
for a polyexp function $r(w)$. In this case, we obtain $\lambda = -i \cdot r(w)$. This means that if we choose $w$ so that $\lambda = -i \cdot r(w)$ is real, the polynomials $P$ and $Q$ will have a common root at $w$ for the choice of $a$, $b$, and $\lambda$, and hence the minimal surface will be singular at $w$.  In other words, for this choice of $a$ and $b$, for no choice of $\lambda$ the surface will be regular in the entire complex plane. In section \ref{sec:cycloids}, we will give an example for this behavior.

Similarly, in section \ref{sec:ellipse} we will give an example that where  isolated choices of $\lambda$  lead to  surfaces with singularities.
\end{remark}

\begin{remark}
The formula for the Gauss map $G$ in the proof can also be used to determine the total curvature in case the plane curve is trigonometric.
\end{remark}

\section{Examples}

In this section, we will apply Theorem \ref{thm:lift} to  some classical planar curves. Except for the first example, all the surfaces we obtain are new.

\subsection{Circles}\label{sec:circles}

The lifts of circles, parametrized by arc length, will either be circles or helices.

Let $x(t)=\cos(t)$ and $y(t)=\sin(t)$. Then, 
\begin{align*}
\Psi_\lambda(t) &= \frac1{2\lambda} R(x'(t), y'(t),\lambda) \\
&=\frac1{2\lambda}
\begin{pmatrix}
 -\lambda ^2-\cos(2t) & -\sin (2t) & -2 \lambda  \sin (t) \\
 - \sin (2t) & -\lambda ^2+\cos (2t) & 2 \lambda  \cos (t) \\
 -2 \lambda  \sin (t) & 2 \lambda  \cos (t) & \lambda ^2-1 \\ 
  \end{pmatrix} \ .
\end{align*}

The integral of the third column  gives the core curve

\[
\begin{pmatrix}
x(t)\\
y(t)\\
 z(t)\\
\end{pmatrix}
=
\begin{pmatrix}
 \cos (t) \\
 \sin (t) \\
 \frac{t \left(\lambda ^2-1\right)}{2 \lambda } \\
 \end{pmatrix}
\]
which is a circle (if $|\lambda|=1$) or a helix. We have discussed the circular case in Section \ref{sec:circular}. Images of \bj surfaces based on a helix are in Figure \ref{fig:helices}.

Using the first two columns, we can form and simplify the rotating normal
\[
n(t) =\frac1{1+\lambda^2}
\begin{pmatrix}
-\cos ((a-2) t+b)  -\cos (a t+b) \lambda ^2 \\
 \sin ((a-2) t+b)- \sin (a t+b)\lambda ^2 \\
 2  \sin ((a-1) t+b)  \lambda\\
 \end{pmatrix} \  .
\]
The \bj integral can easily and explicitly be evaluated, but the equations are not illuminating. More interesting are the \we data, which can be written after the substitution $w = -i \log(z)$ as

\begin{align*}
G(w) &= \frac{P(w)}{Q(w)} \\
dh & = -\frac{ie^{-i b}}{2\lambda w^{a+1}} P(w) Q(w)\, dz
\end{align*}

with

\begin{align*}
P(w) &= w-i e^{i b} \lambda  w^a\\
Q(w) &=i \lambda -e^{i b} w^{a-1} \ .
\end{align*}
This shows that if $a$ is a positive integer,  the \we data of the surface are defined on the punctured plane $\C^*$, 
and the Gauss map has degree $a$. Furthermore, $P$ and $Q$ don't have a root in common, because otherwise $P-i\lambda w Q = w(\lambda^2+1)$ had a root at the same point $w$. This would mean that $w=0$, but $Q(0)\ne0$.
This implies that  the surface is regular everywhere in $\C^*$.

In case when $a=0$ or $a=1$, the degree of the Gauss map is 1, and the surface is in the family of associated surfaces of the catenoid.

\def\fw{2.7in}
\begin{figure}[H]
	\centering
	\begin{subfigure}[t]{\fw}
  		\centering
		\includegraphics[width=\fw]{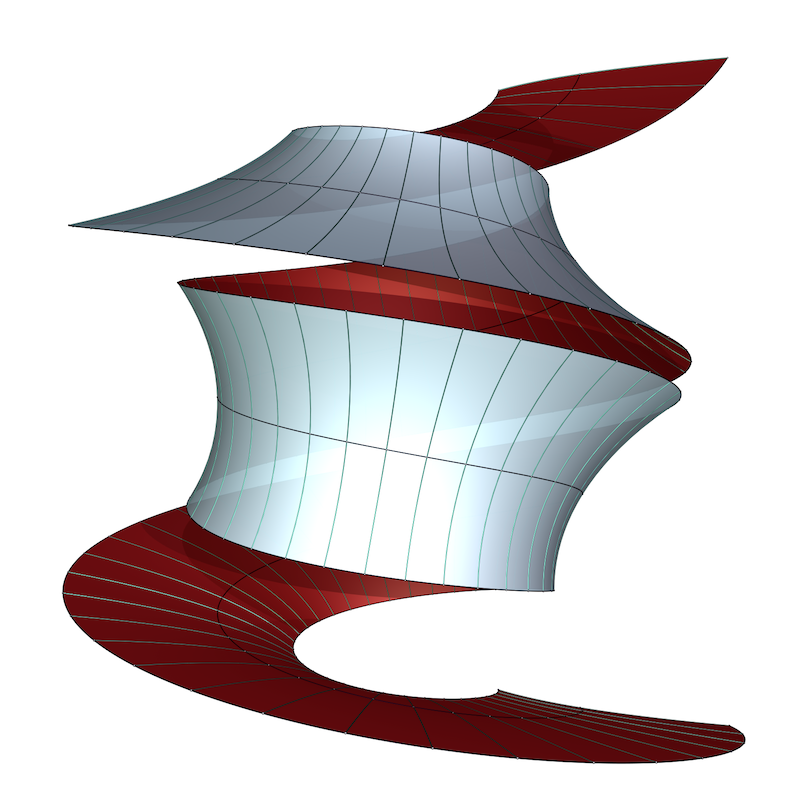}
  		\caption{$a=0$}
	\end{subfigure}
	\quad
	\begin{subfigure}[t]{\fw}
  		\centering
		\includegraphics[width=\fw]{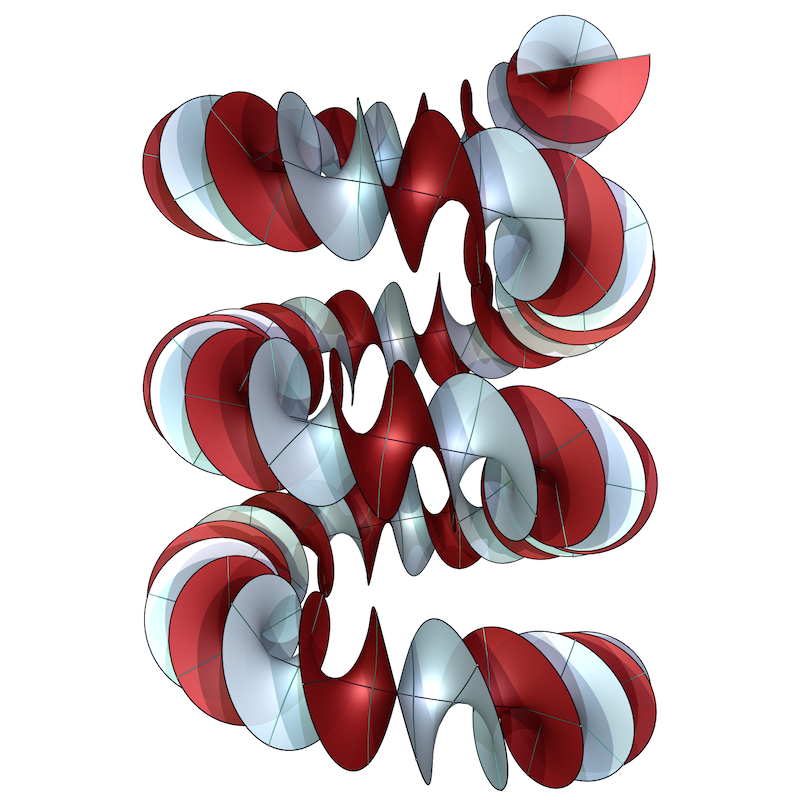}
  		\caption{$a=10$}
	\end{subfigure}
 	 \caption{\bj surfaces based on a helix}
 	 \label{fig:helices}
\end{figure}

These \bj surfaces based on helices can also be easily obtained using the quaternion method,
starting with  a (non-great) circle of   $\s^3$.

With a free parameter $\sigma\in (0,\pi/2)$, let
\[
q(t) =  \sin(\sigma) {\bf 1}+ \cos(\sigma)\cos(t){\bf j}+ \cos(\sigma)\sin(t){\bf k} \ .
\]
 We obtain
 \[
Q(t)=
\begin{pmatrix}
 -\cos (2 \sigma ) & - \sin (t) \sin (2\sigma ) & \cos (t) \sin (2 \sigma ) \\
 \sin (t) \sin (2 \sigma ) & \cos (2 t) \cos ^2(\sigma )+\sin ^2(\sigma ) & \cos ^2(\sigma ) \sin (2 t) \\
 - \cos (t) \sin (2\sigma )& \cos ^2(\sigma ) \sin (2 t) & \sin ^2(\sigma )-\cos (2 t) \cos
   ^2(\sigma )
\end{pmatrix} \ ,
\]
and the core curve becomes the (horizontal) helix
\[
c(t)= -\left(  \cos (2 \sigma ) t , \sin (2 \sigma ) \cos (t), \sin (2\sigma )  \sin (t) \right) \ .
\]

Using this curve together with the normal vector (assuming $b=0$ without loss of generality thanks to the screw motion invariance of helices)

\[
n(t) = \cos(at) 
\begin{pmatrix}
 -\sin (t) \sin (2\sigma ) \\
 \cos (2 t) \cos ^2(\sigma )+\sin ^2(\sigma ) \\
 \cos ^2(\sigma ) \sin (2 t) \\
 \end{pmatrix}
 +\sin(at)
 \begin{pmatrix}
 \cos (t) \sin (2 \sigma ) \\
 \cos ^2(\sigma ) \sin (2 t) \\
 \sin ^2(\sigma )-\cos (2 t) \cos ^2(\sigma ) \\
  \end{pmatrix}
 \]
we obtain the same \bj surfaces as above, rotated by $90^\circ$.

\subsection{Ellipses}\label{sec:ellipse}

Creating an explicit \bj surface with normal rotating along a planar ellipse  leads to elliptical integrals. Our lifting method avoids this problem by slightly bending the ellipse into a spatial curve. 
We illustrate this with the ellipse $x(t)=\cos( t)$ and $y(t)=3\sin( t)$. This will also serve as an example where a particular choice of $\lambda$ can lead to a minimal surface with singularities.

We compute the third coordinate as

\[
z(t) = \frac1{2\lambda} \left( (\lambda^2-5) t - 2\sin(2t) \right) \ .
\]
This curve closes when $\lambda=\sqrt{5}$. We use the curve $c(t)=(x(t), y(t), z(t))$ as core curve and chose  the  rotating normal with $a=2$ and $b=0$ so that it is given by
\[
n(t) = \frac1{\lambda ^2+4 \cos (2 t)+5}
\begin{pmatrix}
 -\left(\lambda ^2+4\right) \cos (2 t)-\cos (4 t)-4 \\
 \left(4-\lambda ^2\right) \sin (2 t)+\sin (4 t) \\
 2 \lambda  (2 \sin (t)+\sin (3 t)) \\ 
\end{pmatrix} \ .
\]
After changing the coordinate $z$ to $w$ using $w=e^{i z}$, the \we data of the resulting \bj surface are given by

\begin{align*}
G(w) &= \frac{-1+w^2 (-2+i \lambda  w)}{w \left(-i \lambda +w^3+2 w\right)} \\
dh & = \frac{\left(-i \lambda +w^3+2 w\right) \left(w^2 (\lambda  w+2 i)+i\right)}{2 \lambda 
   w^4} \, dw
\end{align*}

This shows that the surface is defined and regular in $\C^*$ {\em unless} the numerator and denominator of $G$ have a common root. This happens when $|\lambda|=1$. In case $\lambda=1$, the common roots are at 
$w=\frac{i}{2} \left(\sqrt{5}-1\right)$ and $w=-\frac{i}{2}  \left(\sqrt{5}+1\right)$. In Figure \ref{fig:ellipse} we show the regular surface for $\lambda=\sqrt5$ (when the core curve is closed) on the left, and the singular periodic surface with $\lambda =1$ on the right.

\def\fw{2.8in}
\begin{figure}[H]
	\centering
	\begin{subfigure}[t]{\fw}
  		\centering
		\includegraphics[width=\fw]{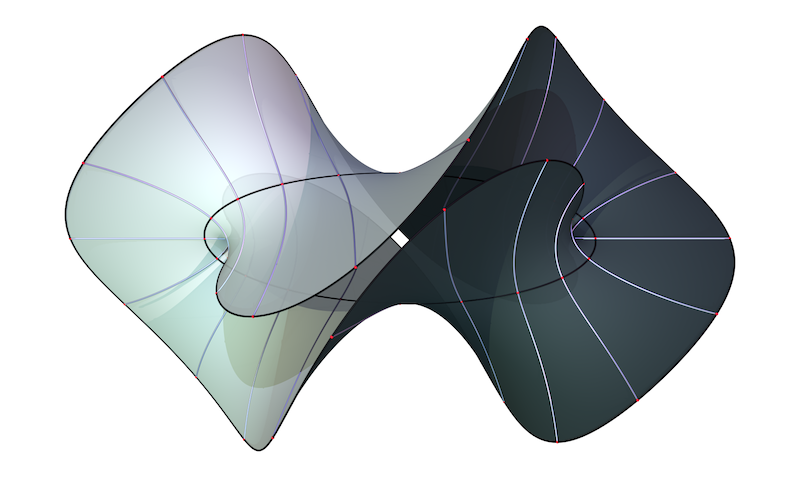}
  		\caption{$\lambda=\sqrt5$}
	\end{subfigure}
	\begin{subfigure}[t]{\fw}
  		\centering
		\includegraphics[width=\fw]{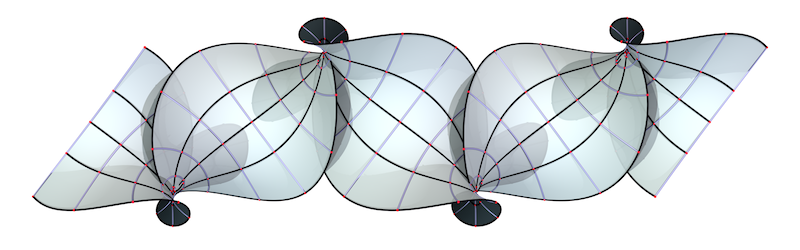}
  		\caption{$\lambda=1$}
	\end{subfigure}
 	 \caption{\bj surfaces based on an ellipse}
 	 \label{fig:ellipse}
\end{figure}

\subsection{Lissajous Curves}\label{sec:lissajous}

\def\fw{1.5in}
\begin{figure}[H]
	\centering
	\begin{subfigure}[t]{\fw}
  		\centering
		\includegraphics[width=\fw]{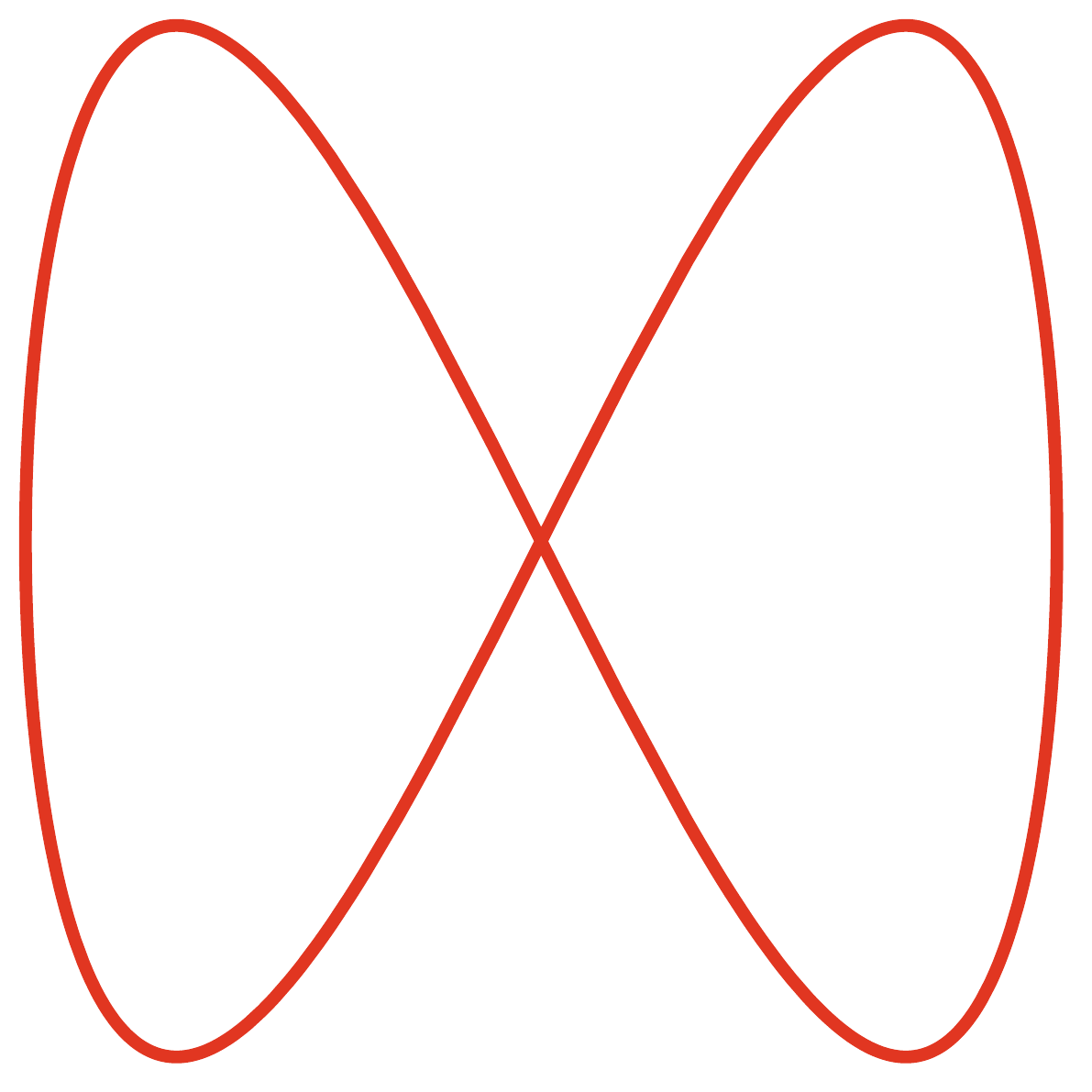}
  		\caption{$(1,2)$}
	\end{subfigure}
	\quad
	\begin{subfigure}[t]{\fw}
  		\centering
		\includegraphics[width=\fw]{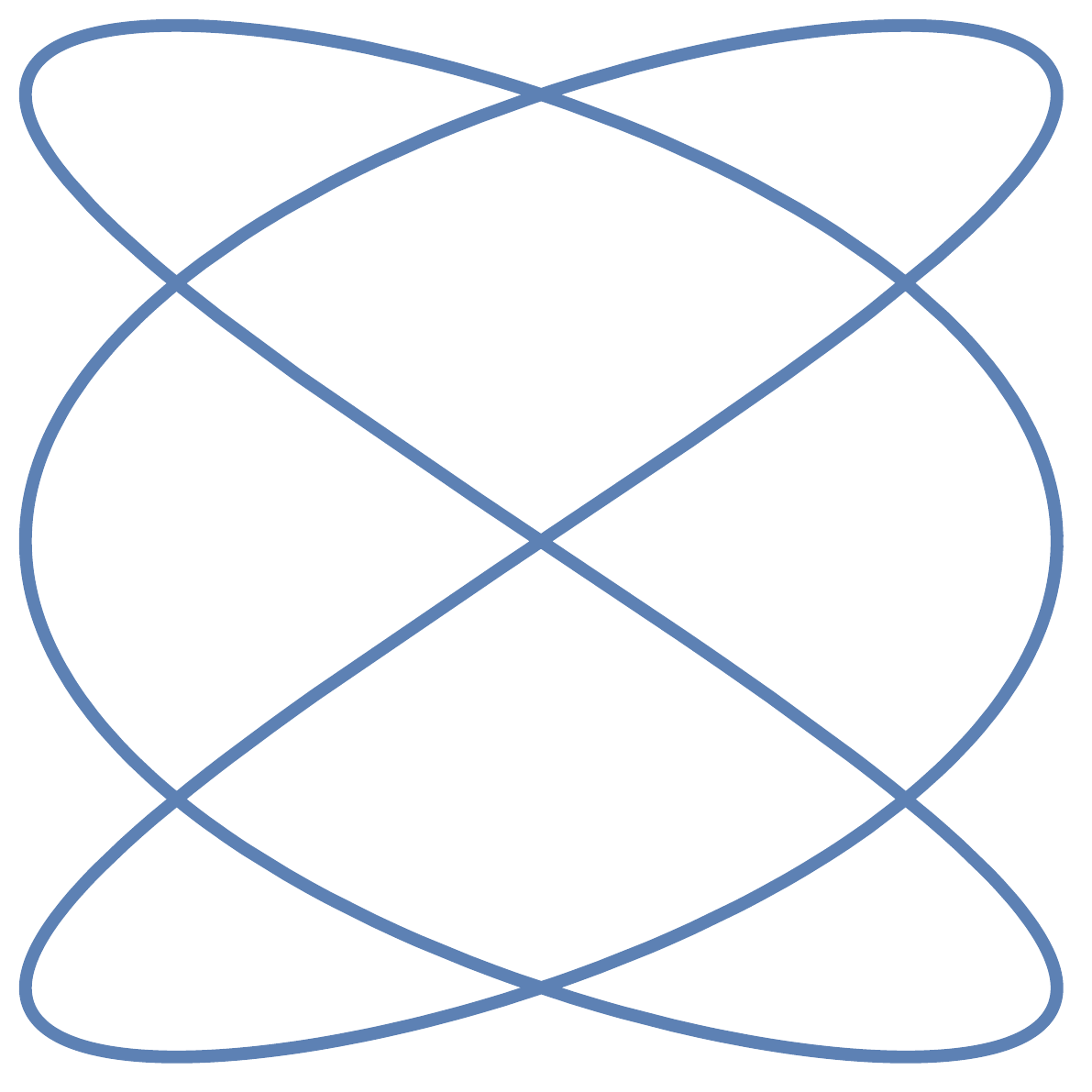}
  		\caption{$(3,2)$}
	\end{subfigure}
 	 \caption{Two Lissajous curves}
 	 \label{fig:Lissajous}
\end{figure}

As another class of trigonometric curves, we can consider the $(\xi,\eta)$-Lissajous curves given by
$x(t)=\cos(\xi t)$ and $y(t)=\sin(\eta t)$ with integer parameters $\xi$ and $\eta$. Their lifts have as $z$ coordinate
\[
z(t) =\frac1{8\lambda}
\left(-2 \left(\xi ^2+\eta ^2-2 \lambda ^2\right)t +\xi  \sin (2 \xi  t)-\eta  \sin
   (2 \beta  t)
   \right) \ .
\]
We can, as observed before, make this a closed curve by choosing $\lambda$ such that
\[
2\lambda^2=\xi^2+\eta^2 \ .
\]

By changing the value of $b$ in the rotating normal
\[
n(t) =\frac{2\lambda}{\lambda^2+x'(t)^2+y'(t)^2}\left(
\cos(at + b)
n_1(t)
+\sin(at+b)
n_2(t)
\right) \ ,
\]
one can uniformly rotate the normal about the core curve. For lines, circles, or helices as core curves, the effect is just a translation, rotation, or screw motion of the surface, but for other curves, the appearance can change significantly.

In Figure \ref{fig:Lissajous21} we show the \bj surfaces for the lift of the $(1,2)$-Lissajous curves with $\lambda=2$, $a=1$, and two different values of $b$. 

\def\fw{2.7in}
\begin{figure}[H]
	\centering
	\begin{subfigure}[t]{\fw}
  		\centering
		\includegraphics[width=\fw]{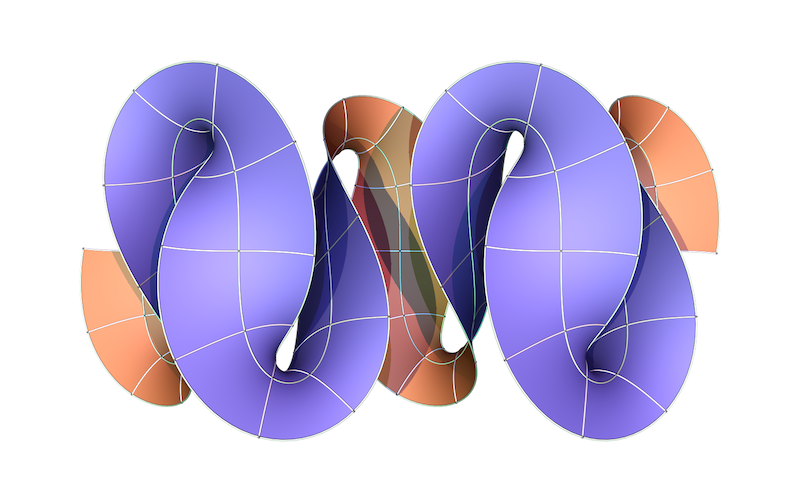}
  		\caption{$b=0$}
	\end{subfigure}
	\quad
	\begin{subfigure}[t]{\fw}
  		\centering
		\includegraphics[width=\fw]{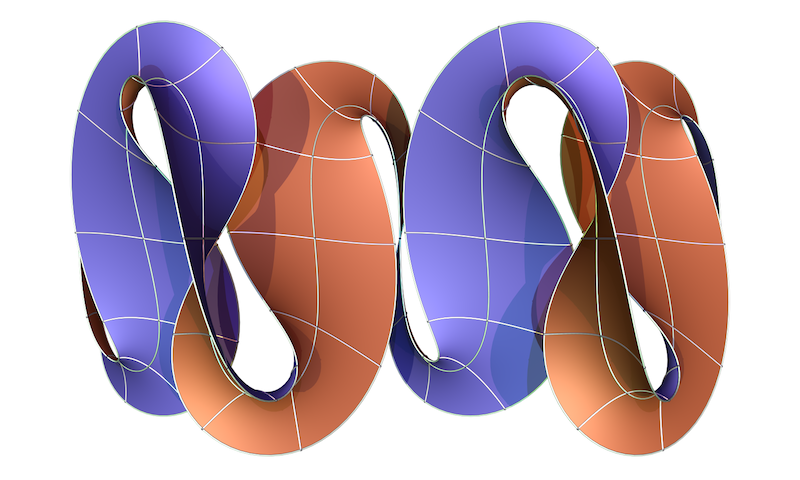}
  		\caption{$b=\pi/2$}
	\end{subfigure}
 	 \caption{\bj surfaces for the $(1,2)$-Lissajous curve with $a=1$.}
 	 \label{fig:Lissajous21}
\end{figure}

In Section \ref{sec:snake} we used the quaternion method to construct a \bj surface with core curve
\[
c(t)=\left(-\sin (t),\frac{\cos ^2(t)}{2},\frac{1}{4} \sin (2 t)-\frac{t}{2}\right) \ .
\]
This core curve projects onto the $xy$-plane as the curve 
\[
(x(t), y(t)) = \left(-\sin (t),\frac{\cos ^2(t)}{2}\right) = \left(-\sin (t),\frac{1}{4} (\cos (2 t)+1)\right) \ ,
\]
which essentially is a Lissajous curve. Applying the lifting method to this curve gives the $z$ coordinate as
\[
z(t) = \frac1{64\lambda}
\left(4 t \left(8 \lambda^2 -5\right)-8 \sin (2 t)+\sin (4 t)\right) \ ,
\]
which is significantly more complicated than what we obtained with the quaternion method. Following Remark \ref{rem:other}, we notice that we may chose as $\lambda = \lambda(t)$ any factor of 
\[
x'(t)^2+y'(t)^2 = \left(\sin ^2(t)+1\right) \cos ^2(t) \ .
\]
and will still obtain integrable \bj data. In fact, using $\lambda(t) = -(\sin(t)^2+1)$ produces exactly the same core curve as in Section \ref{sec:snake}. The quaternion method is still more general, because the curves in $\R\cdot \so$ produced by the lifting method are always multiples of $180^\circ$ rotations.

\subsection{Cycloids}\label{sec:cycloids}

\def\fw{1.3in}
\begin{figure}[h]
	\centering
	\begin{subfigure}[t]{\fw}
  		\centering
		\includegraphics[width=\fw]{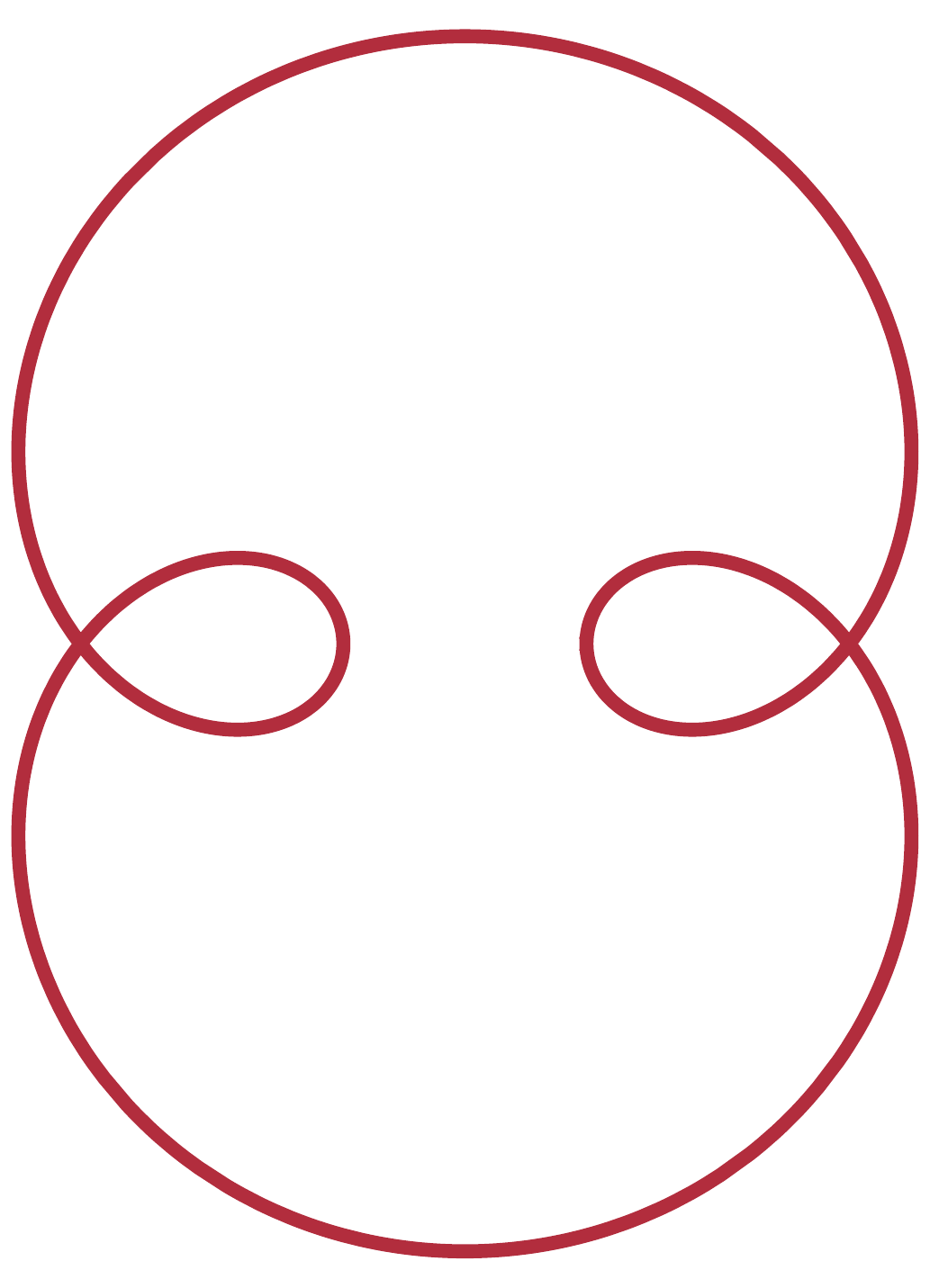}
  		\caption{order 2 cycloid}
	\end{subfigure}
	\quad
	\begin{subfigure}[t]{\fw}
  		\centering
		\includegraphics[width=\fw]{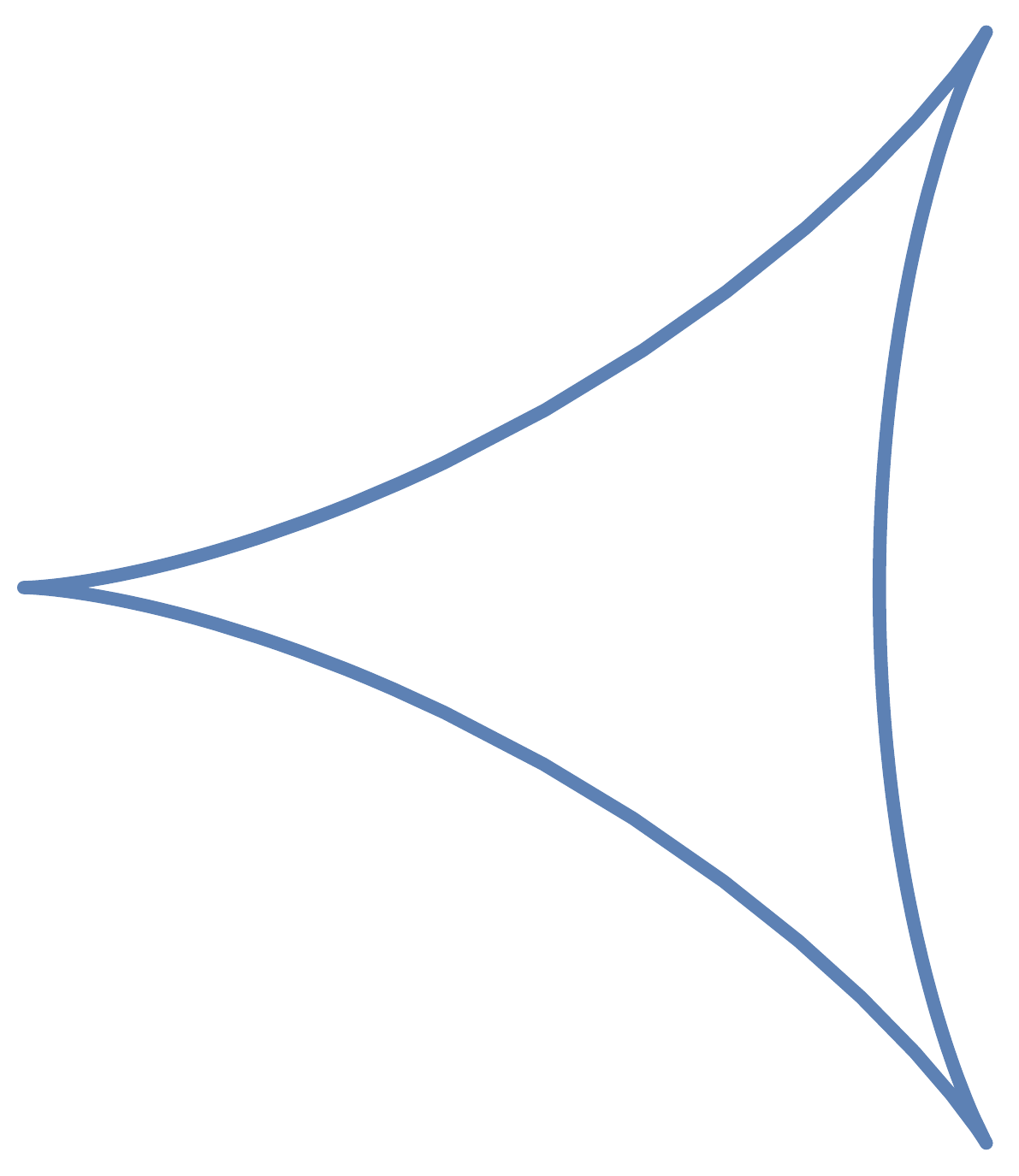}
  		\caption{Deltoid}
	\end{subfigure}
 	 \caption{Two cycloids}
 	 \label{fig:cycloids}
  \end{figure}

The general cycloid is the trace of a chosen point in the plane of  a circle that rolls along another fixed circle. If the fixed circle has radius $R$, the rolling circle radius $r$, and the tracing point in the plane of the rolling circle has distance $s$ from the center of the rolling circle, the cycloid  can be parametrized as
\[
\begin{pmatrix}
x(t)\\
y(t)
\end{pmatrix} = (R+r) \begin{pmatrix}
\cos(t)\\
\sin(t)
\end{pmatrix} 
-s \begin{pmatrix}
\cos((1+R/r)t)\\
\sin((1+R/r)t)
\end{pmatrix} \ .
\]

\def\fw{2.8in}
\begin{figure}[h]
	\centering
	\begin{subfigure}[t]{\fw}
  		\centering
		\includegraphics[width=\fw]{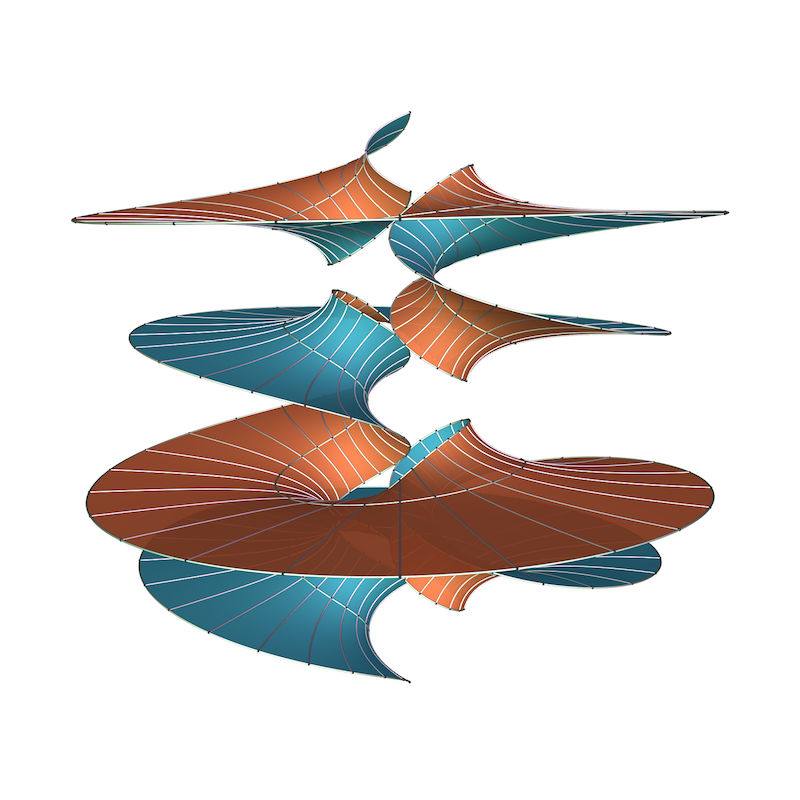}
  		\caption{$a=2$ and $\lambda=6$}
		\label{fig:cyc2}
	\end{subfigure}
	\begin{subfigure}[t]{\fw}
  		\centering
		\includegraphics[width=\fw]{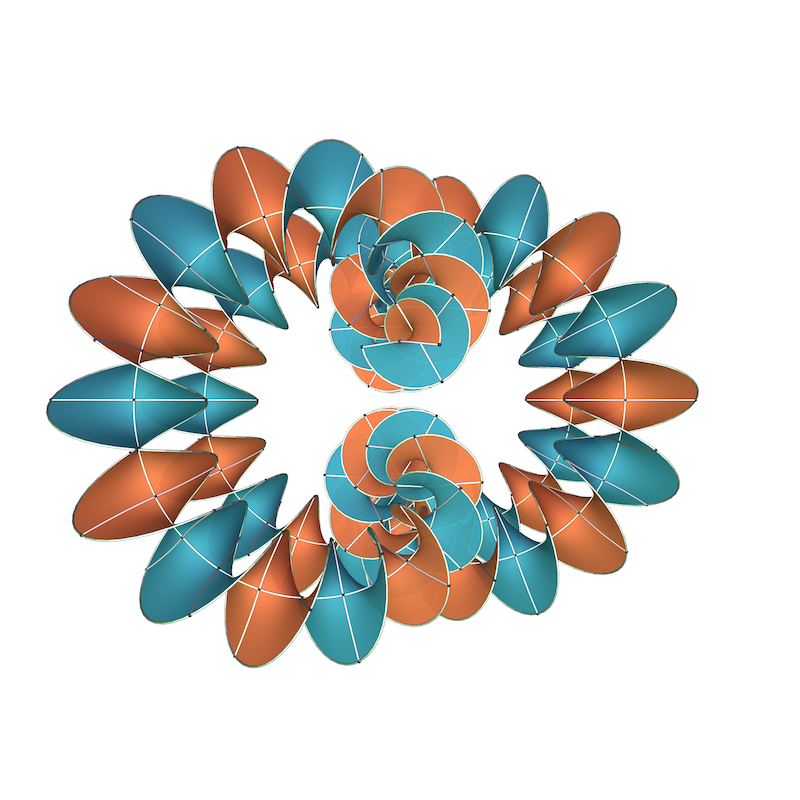}
  		\caption{$a=20$, closed curve}
		\label{fig:cyc20}
	\end{subfigure}
 	 \caption{\bj surfaces based on a lifted cycloid}
 	 \label{fig:cycloid2min}
  \end{figure}
  
We will discuss two special cases. 
Let 
\[
(x(t),y(t)) = (3 \cos (t)-2 \cos (3 t),3 \sin (t)-2 \sin (3 t)) \ .
\]
The $z$-coordinate of  the lifted curve becomes
\[
z(t) = 
 \frac1{2 \lambda }  \left(t \left(\lambda ^2-45\right)+18 \sin (2 t)\right)
   \]
so that the lifted curve  is closed when $\lambda = 3\sqrt{5}$. Observe that the self intersections disappear in the lift. This is a common but not universal phenomenon --- for instance, the closed lifts of Lissajous curves still have self intersections. We show the resulting \bj surface in the closed case with $a=20$ in Figure \ref{fig:cyc20},
and in Figure \ref{fig:cyc2} a periodic surface obtained  with $\lambda=6$ and $a=2$. We have chosen $b$  such that large parts of the surface are embedded.

Another simple example of a cycloid is the deltoid, given by

 \[
(x(t),y(t)) = (-2 \cos (t)-\cos (2 t),\sin (2 t)-2 \sin (t)) \ .
\]
The $z$-coordinate of the lift becomes
\[
z(t)=\frac1{6\lambda}\left(3 \left(\lambda ^2-8\right) t+8 \sin (3 t)\right) \ .
\]
Observe that the singularity of the plane deltoid disappears in the lifted curve. For $\lambda=2\sqrt2$ the lift is closed.

\def\fw{2.7in}
\begin{figure}[H]
	\centering
	\begin{subfigure}[t]{\fw}
  		\centering
		\includegraphics[width=\fw]{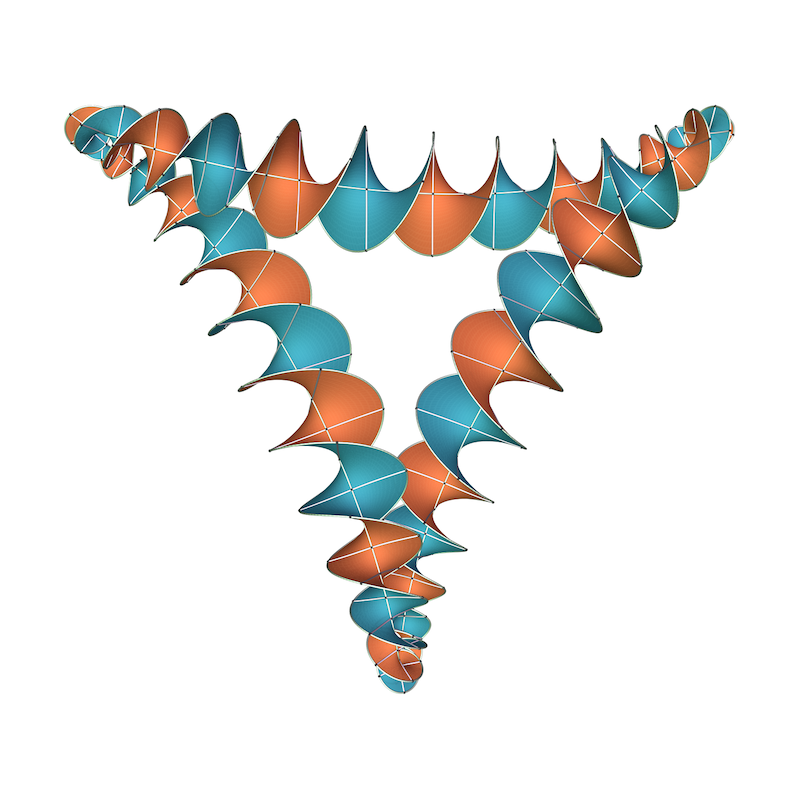}
  		\caption{$a=20, b=0$}
	\end{subfigure}
	\quad
	\begin{subfigure}[t]{\fw}
  		\centering
		\includegraphics[width=\fw]{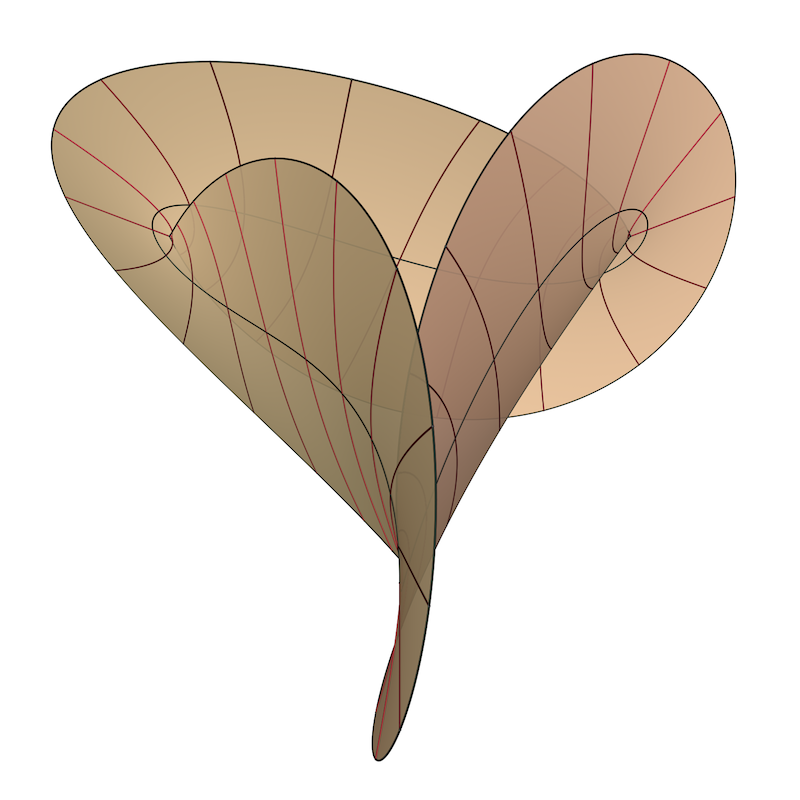}
  		\caption{$a=-\frac12, b=\frac\pi2$}
		\label{fig:singdeltoid}
	\end{subfigure}
 	 \caption{\bj surfaces based on the closed lift of the deltoid}
 	 \label{fig:deltoid}
  \end{figure}
  
 For $a=-\frac12$ and $b=\frac \pi2$, the \bj surface based on the deltoid exhibits the  behavior explained in Remark \ref{rem:singular}. To see this, we note that the tangent vector 
 of the deltoid can be written as

 \[
\begin{pmatrix}
x'(w)\\ y'(w)
\end{pmatrix} = r(w) \cdot
\begin{pmatrix}
-\sin(aw+b) \\ \cos(a w+b)
\end{pmatrix}
\]
with $a=-\frac12$, $b=\frac \pi2$ and $r(w) = -4 \sin(3w/2)$
so that $w$ will be a singularity if we choose $\lambda =  4 i\cdot \sin(3w/2)$. This become apparent in the \we data, given by

\begin{align*} 
G(w) &= -e^{-i w /2}	\\
dh &=  \frac1{2\lambda} \left(\lambda -4 i\cdot \sin(3w/2) \right)^2 \, dw \ .
\end{align*}
We show this singular \bj surface  for $\lambda=2\sqrt2$ when the lifted curve closes in Figure \ref{fig:singdeltoid}. Incidentally, this surface is also a \mo strip.

\subsection{Trefoil Curves}

\def\fw{2in}
\begin{figure}[H]
	\centering
	\begin{subfigure}[t]{\fw}
  		\centering
		\includegraphics[width=\fw]{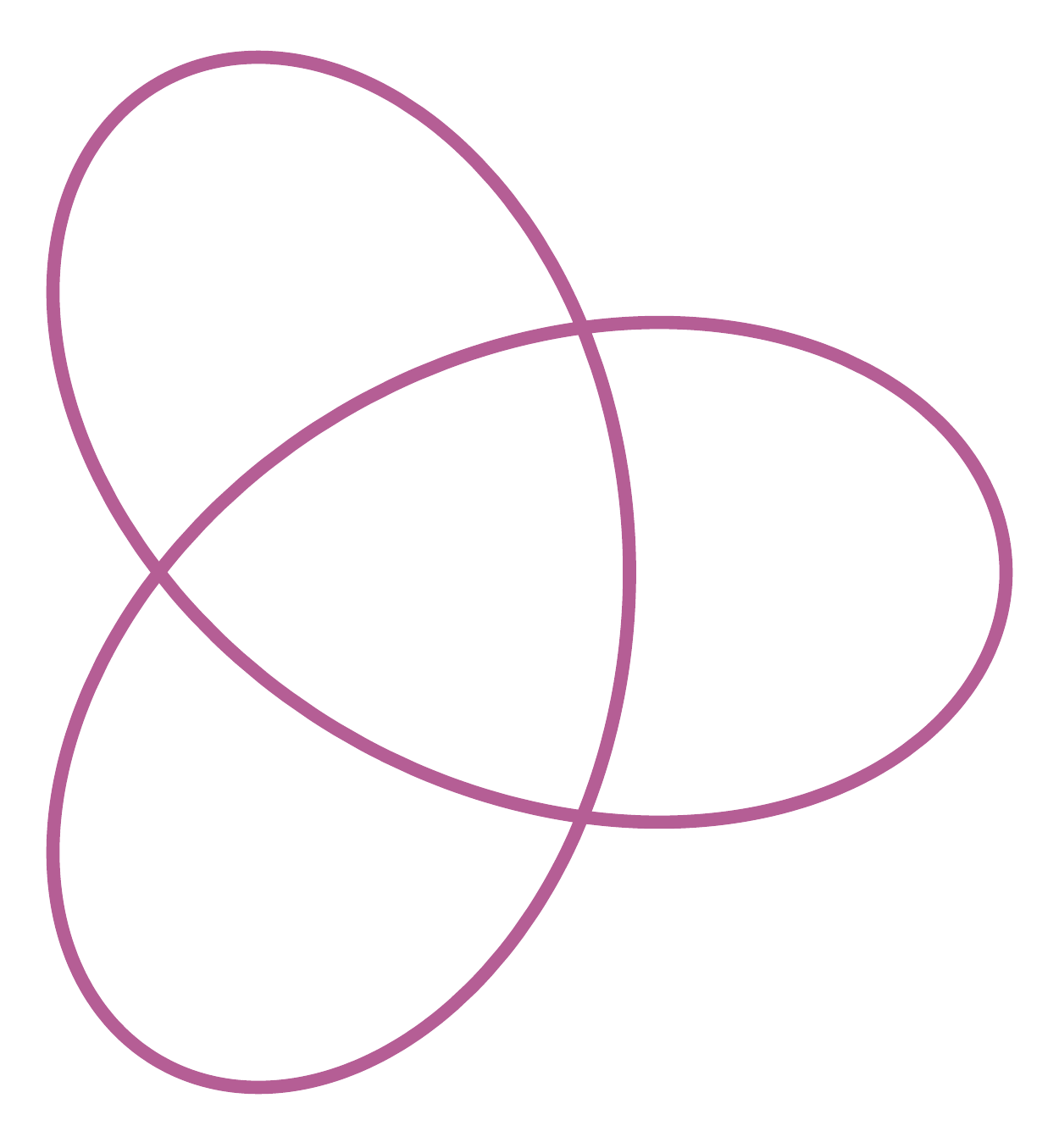}
  		\caption{$\xi=\frac14$}
	\end{subfigure}
	\quad
	\begin{subfigure}[t]{\fw}
  		\centering
		\includegraphics[width=\fw]{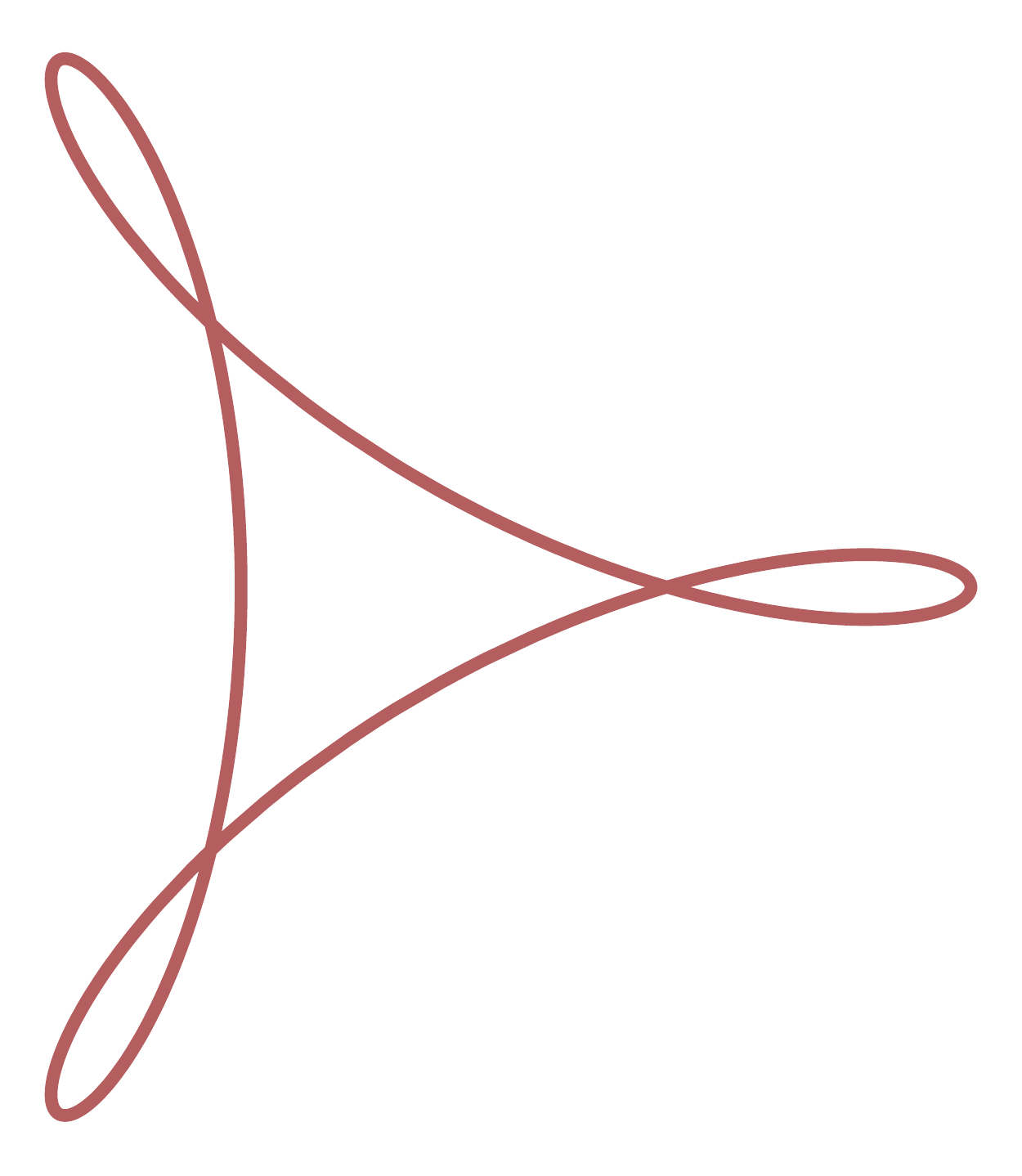}
  		\caption{$\xi=\frac34$}
	\end{subfigure}
 	 \caption{Trefoil Curves}
 	 \label{fig:trefoilcurves}
  \end{figure}

In this section, we construct a knotted \bj surface. The basis of this construction is the following family of curves which we call {\em trefoil curves},

 \[
(x(t),y(t)) =( (\cos(t)-\xi)\cos (t), (\cos(t)+\xi)\sin (t)) \ ,
\]
for a real parameter $\xi$.

The $z$-coordinate of the lift becomes
\[
z(t) = \frac1{6\lambda}\left( 3t(\xi^2+1-\lambda^2)+2\xi \sin(3t)\right) \ ,
\]
which is closed for $\lambda = \sqrt{\xi^2+1}$. Moreover, the lift is knotted for $0<\xi<\frac12$. Choosing $a=\frac12$ and $b=\frac\pi2$ results in  a \bj surface that is an almost horizontal  knotted minimal \mo strip shown in Figure \ref{fig:trefoil2}.

\def\fw{2.7in}
\begin{figure}[H]
	\centering
	\begin{subfigure}[t]{\fw}
  		\centering
		\includegraphics[width=\fw]{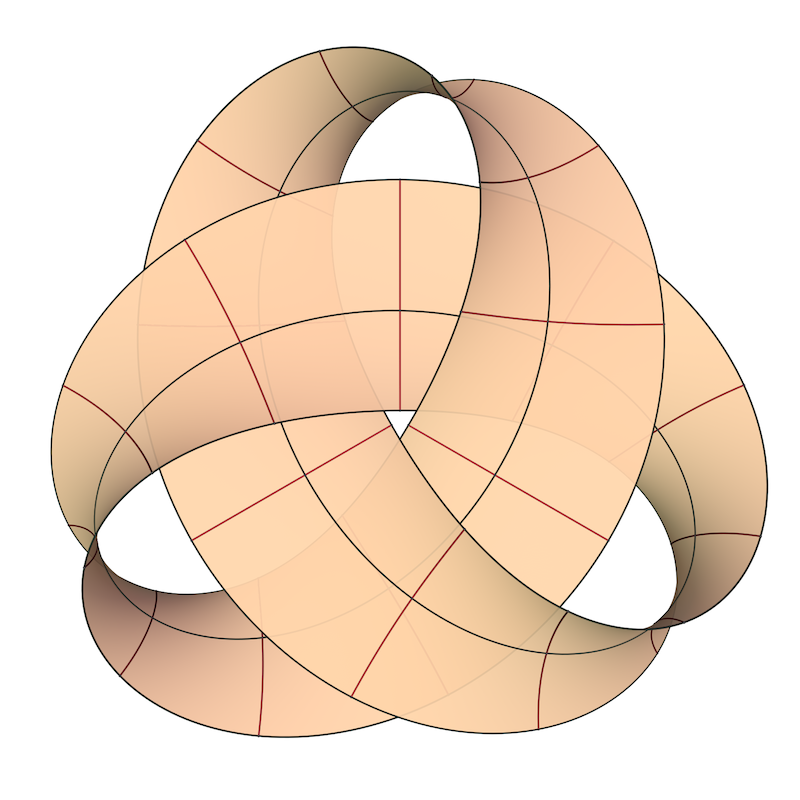}
  		\caption{$a=\frac12, b=\frac\pi2$}
		\label{fig:trefoil2}
	\end{subfigure}
	\quad
	\begin{subfigure}[t]{\fw}
  		\centering
		\includegraphics[width=\fw]{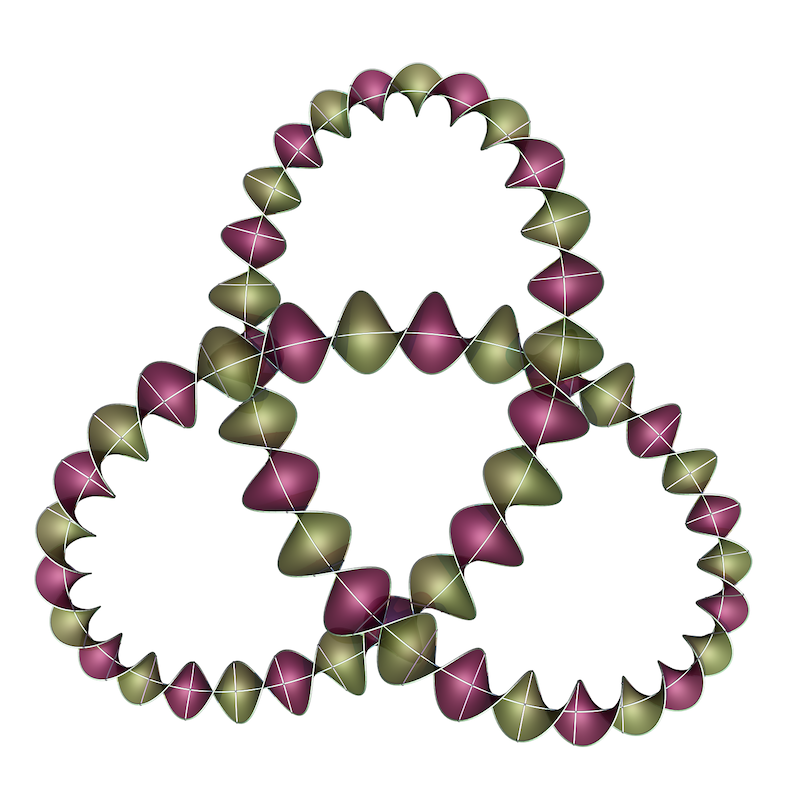}
  		\caption{$a=30, b=0$}
	\end{subfigure}
 	 \caption{\bj surfaces based on the trefoil curve with $\xi=\frac14$}
 	 \label{fig:trefoil30}
  \end{figure}

The \we data of the oriented cover of the \bj surface for $a=\frac12$ and $b=\frac\pi2$ are given in the coordinate $w=e^{i z/2}$ by

\begin{align*}
G(w) &=\frac{i w \left(\xi +w^6+\lambda  w^3\right)}{\xi  w^6-\lambda  w^3+1} \\
dh & =\frac{i \left(\xi +w^6+\lambda  w^3\right) \left(\xi  w^6-\lambda  w^3+1\right)}{ \lambda  w^7} \, dw \ .
\end{align*}
One can show that numerator and denominator of $G(w)$ have no common roots in $\C^*$, which implies that the non-oriented surface is complete, regular, and of finite total curvature $-14\pi$.

\subsection{Spirals}

\def\fw{2in}
\begin{figure}[H]
	\centering
	\begin{subfigure}[t]{\fw}
  		\centering
		\includegraphics[width=\fw]{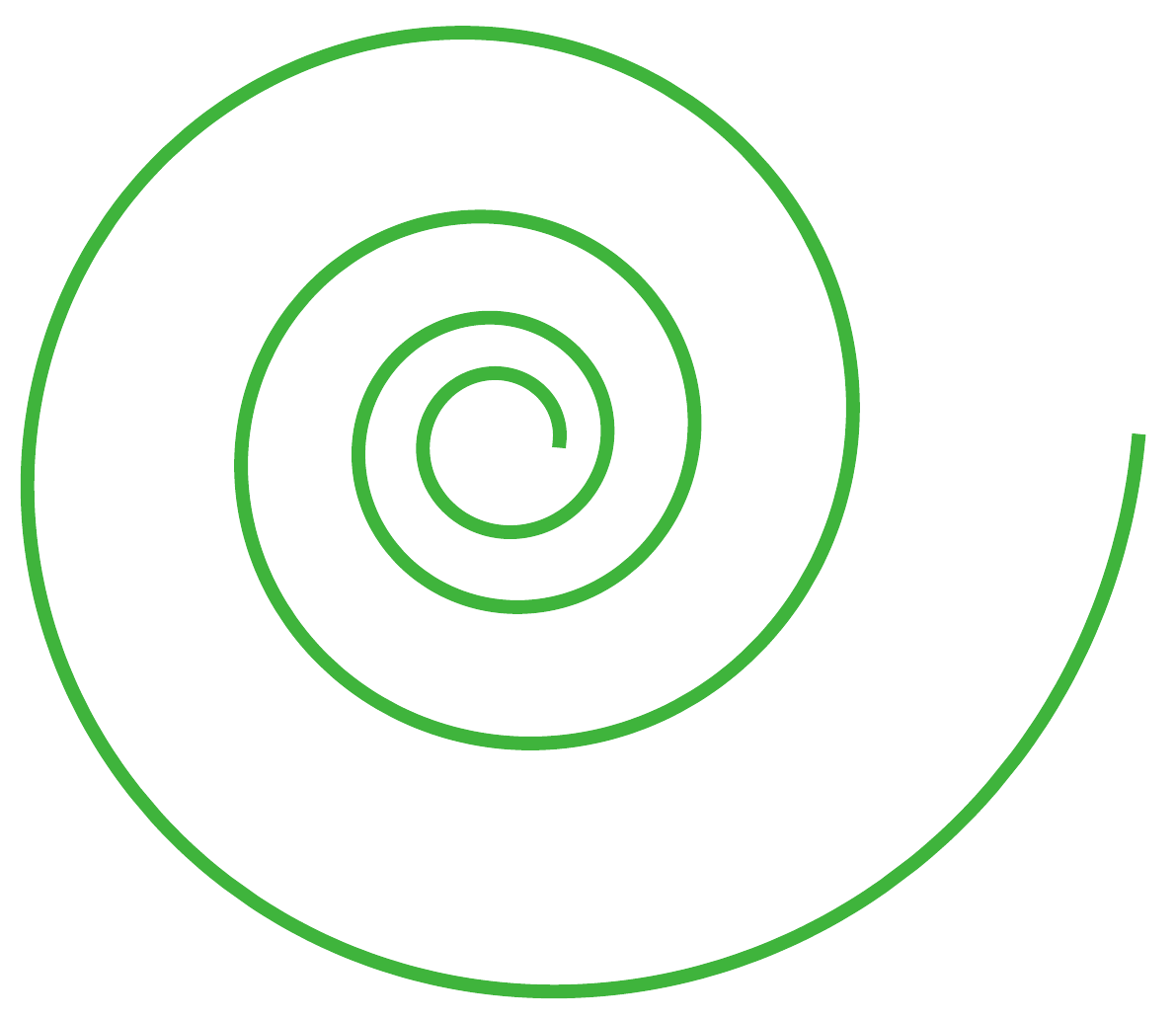}
  		\caption{Logarithmic Spiral}
	\end{subfigure}
	\quad
	\begin{subfigure}[t]{\fw}
  		\centering
		\includegraphics[width=\fw]{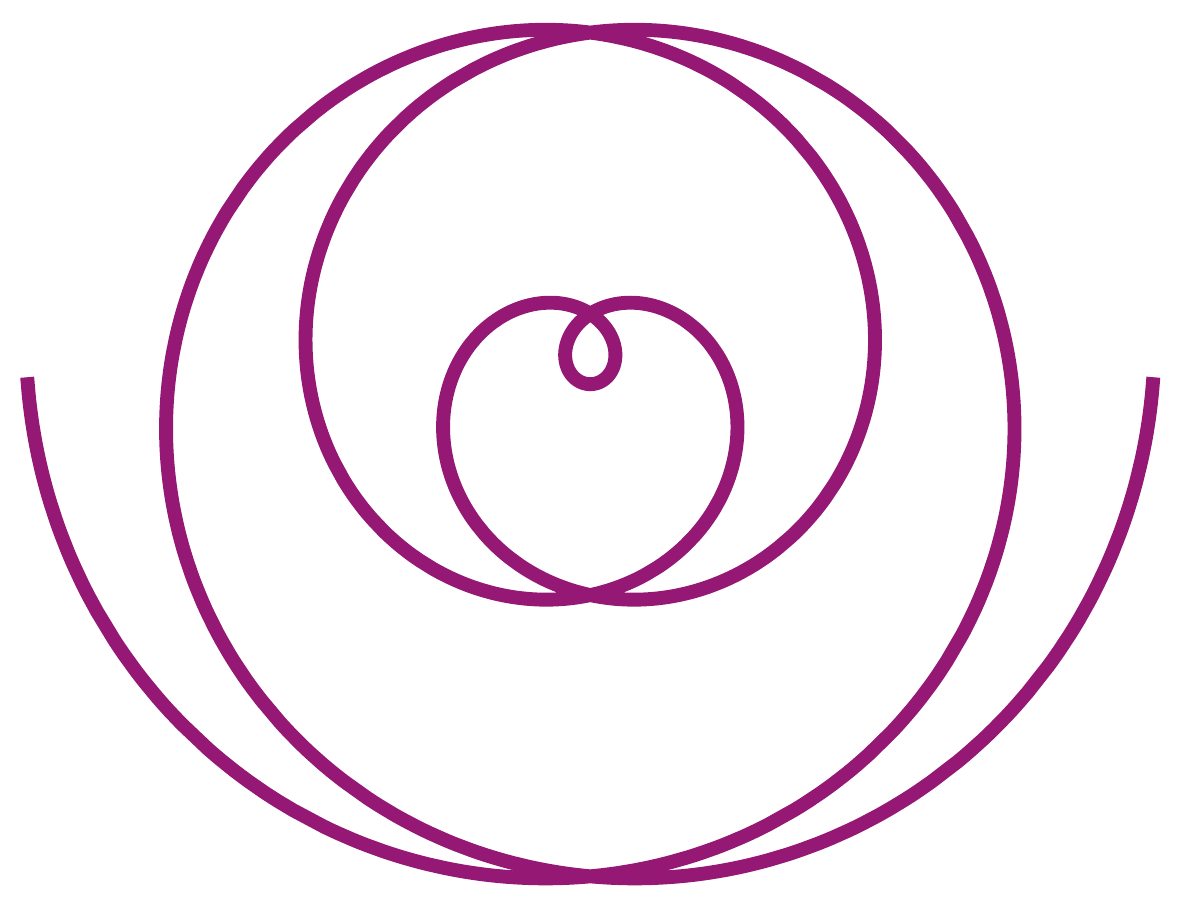}
  		\caption{Archimedean Spiral}
	\end{subfigure}
 	 \caption{Two Spirals}
 	 \label{fig:planespirals}
  \end{figure}

As examples for non-trigonometric polyexp curves, we consider logarithmic and Archimedean spirals.

Logarithmic spirals are given by

 \[
(x(t),y(t)) = \rho^t( \cos (t), \sin (t)) \ .
\]

Here, $z(t)$ becomes
\[
z(t) =  \frac12 {\lambda  t}-\frac{\left(\log ^2(\rho )+1\right) }{4 \lambda  \log
   (\rho )}\rho ^{2 t}  \ .
\]
Observe that the linear term in $t$ guarantees that the space curve becomes proper, see Figure \ref{fig:logspir}.

\def\fw{2.7in}
\begin{figure}[H]
	\centering
	\begin{subfigure}[t]{\fw}
  		\centering
		\includegraphics[height=\fw]{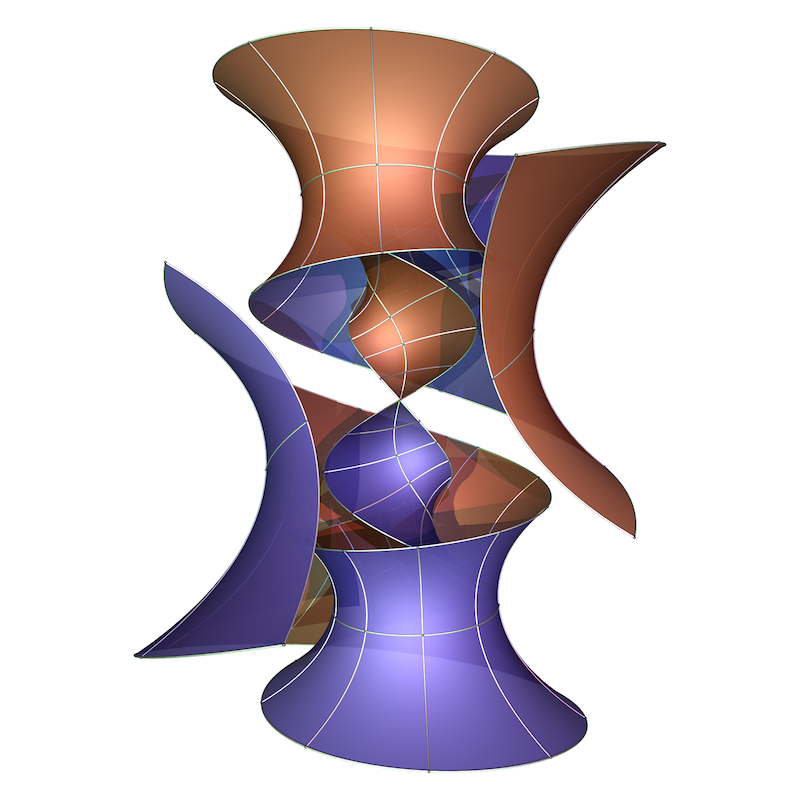}
  		\caption{Archimedean Spiral}
		\label{fig:archimedes}
	\end{subfigure}
	\quad
	\begin{subfigure}[t]{\fw}
  		\centering
		\includegraphics[height=\fw]{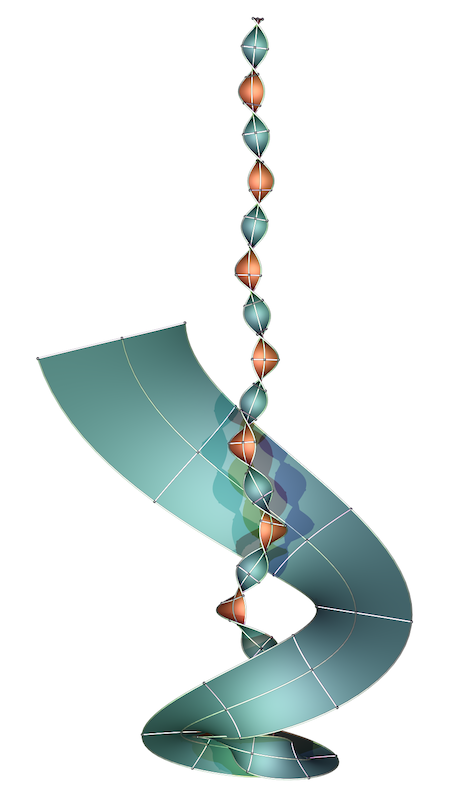}
  		\caption{Logarithmic Spiral}
		\label{fig:logspir}
	\end{subfigure}
 	 \caption{\bj surfaces based on a spirals}
 	 \label{fig:spirals}
  \end{figure}

For the Archimedean spiral
\[
(x(t), y(t)) = t(\cos(t), \sin(t))
\]
we obtain the cubic polynomial
\[
z(t) = -\frac1{6\lambda} t \left(t^2 +3-3\lambda^2\right) \ .
\]
This leads for $|\lambda|>1$ to core curves whose $z$-coordinate has two local extrema,
well visible in Figure \ref{fig:archimedes}.

\def\fw{2.5in}
\begin{figure}[h]
	\centering
	\begin{subfigure}[t]{\fw}
  		\centering
		\includegraphics[width=\fw]{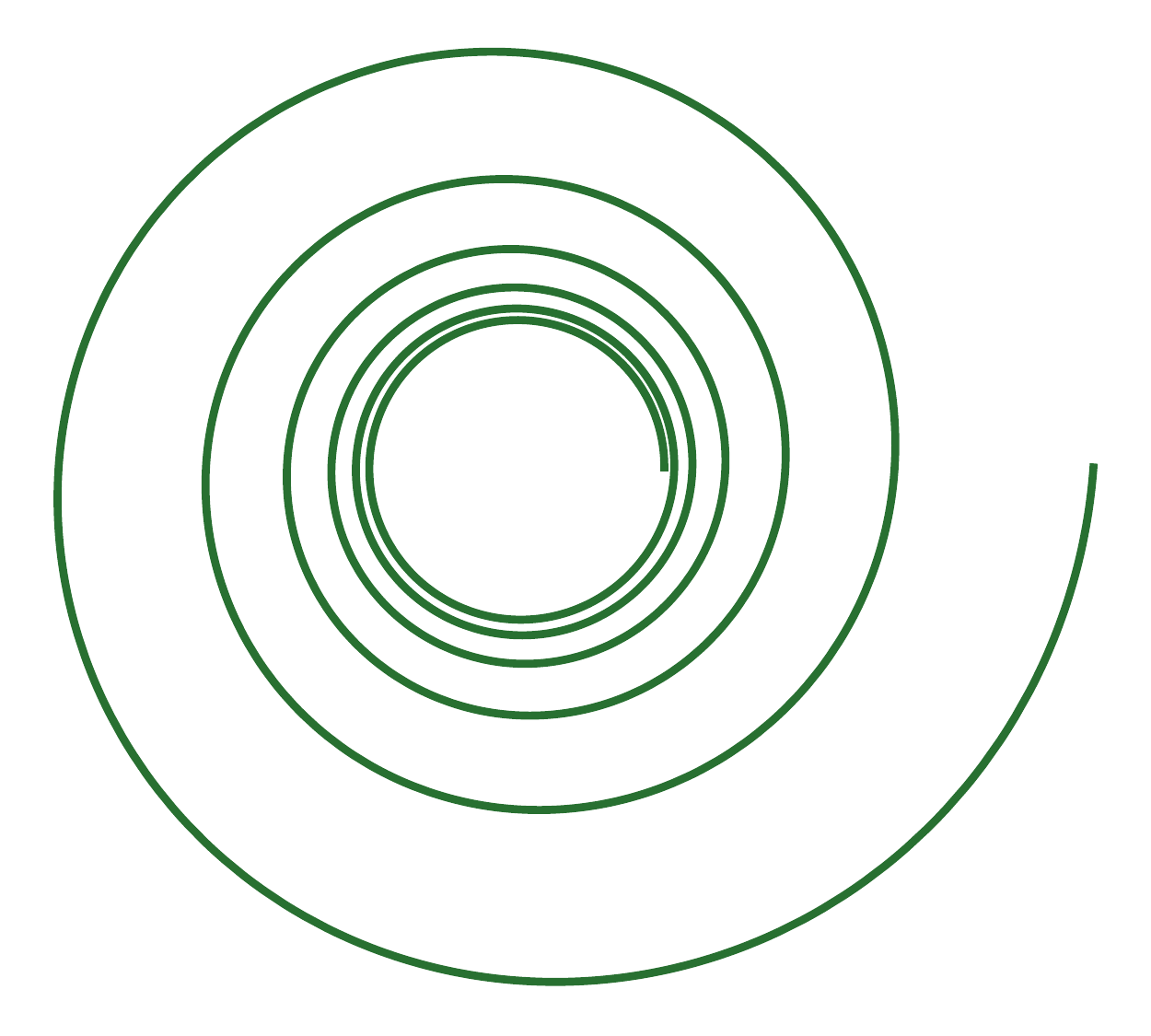}
  		\caption{Spiral about a circle}
		\label{fig:circlespiral}
	\end{subfigure}
	\quad
	\begin{subfigure}[t]{\fw}
  		\centering
		\includegraphics[width=\fw]{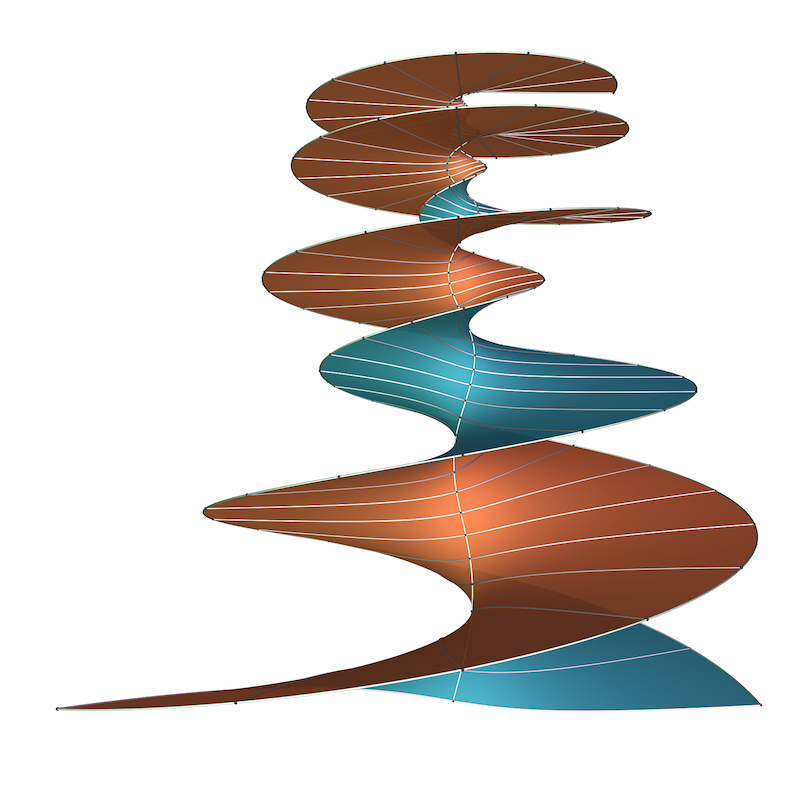}
  		\caption{Helicoid with limit leaf}
		\label{fig:limitleaf}
	\end{subfigure}
 	 \caption{\bj surface using a lifted curve with bounded height}
 	 \label{fig:limits}
  \end{figure}

As a variation of the logarithmic spiral, let   

\[
(x(t), y(t)) =(\rho^t+1) (\cos(t), \sin(t))
\]
which limits for $t\to -\infty$ on the unit circle (see Figure \ref{fig:circlespiral}). We obtain the lift
\[
z(t) = -\frac1{4\lambda \log(r)}  \left((1+\log^2(\rho)) \rho^{2t}+ 4 \rho^t -(\lambda^2-1) t\right) \ .
\]
 This means that for $\lambda=1$, the lifted curve will for $t \to -\infty$  limit on the unit circle at height 0. If we choose in addition $a=3$ and $b=\frac\pi2$, the closure of the surface becomes a minimal lamination in a cylinder about the vertical axis with two leaves: One is the \bj surface (see Figure \ref{fig:limitleaf}), the other a horizontal disk at height 0. This example is very similar to the one constructed in \cite{cm28}.
 
 \subsection{The Clothoid}
  
\def\fw{2.5in}
\begin{figure}[H]
	\centering
	\begin{subfigure}[t]{\fw}
  		\centering
		\includegraphics[width=\fw]{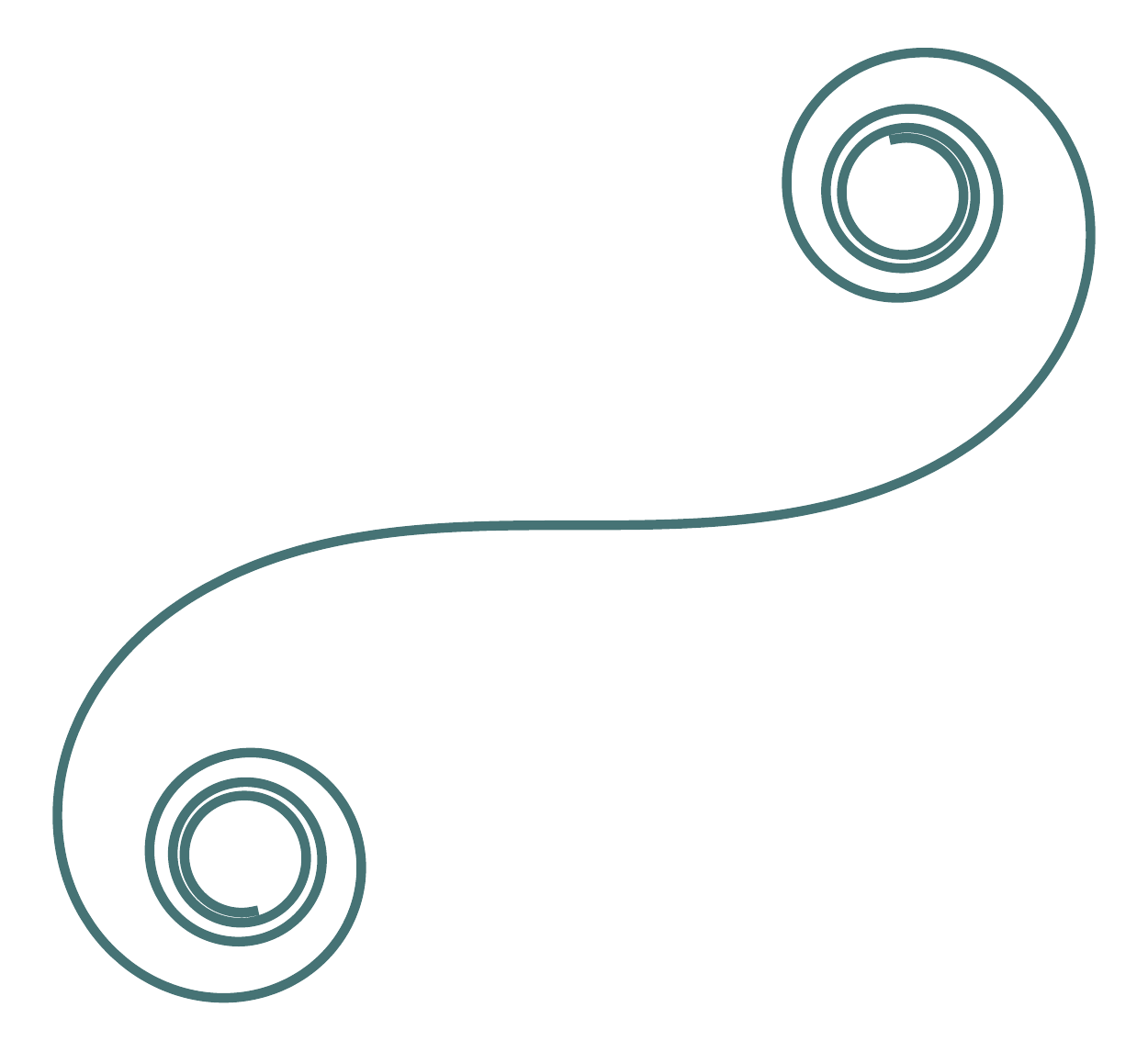}
  		\caption{The clothoid}
			\end{subfigure}
	\quad
	\begin{subfigure}[t]{\fw}
  		\centering
		\includegraphics[width=\fw]{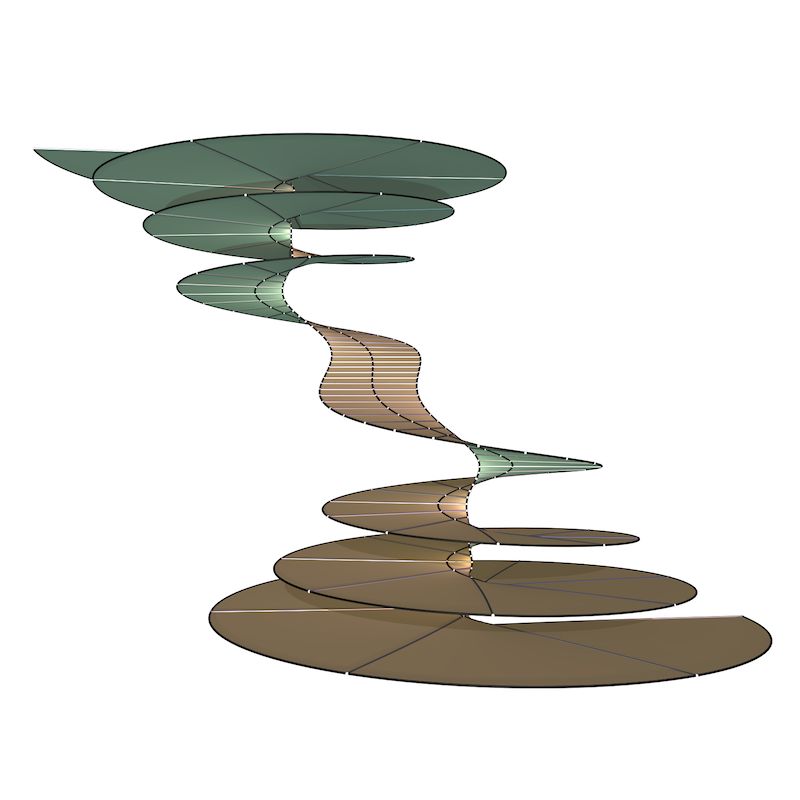}
  		\caption{$\lambda = 1.4$}
	\end{subfigure}
 	 \caption{\bj surface based on a lifted clothoid}
 	 \label{fig:clothoid}
  \end{figure}

In this last example, we discuss the possibility to design \bj surfaces based on curves that are not polyexp. 
  Consider the clothoid given by
  
  \[
  (x(t), y(t)) = (C(t), S(t))
  \]
  where
  \begin{align*} 
     C(t) = {}& \int_0^t \cos(s^2)\, ds  \\
     S(t) = {}& \int_0^t \sin(s^2)\, ds  \\
  \end{align*}
  are the Fresnel integrals.
  
  The $z$-coordinate of the lift is given by
  \[
  z(t) = \frac{\lambda^2-1}{2\lambda} t \ .
  \]
  
  As rotating normal we choose
  \begin{align*} 
  n(t) ={}& \cos(t^2) n_1(t) + \sin(t^2) n_2(t) \\
  ={}& \frac1{\lambda ^2+1}
  \begin{pmatrix}
   {\left(1-\lambda ^2\right) \cos \left(t^2\right)} \\
 {\left(1-\lambda ^2\right) \sin \left(t^2\right)} \\
{2 \lambda } \\
 \end{pmatrix} \ .
  \end{align*}
  Note that we are adapting the rotational speed to the parametrization of the clothoid. This results in very simple \we data
    \begin{align*} 
     G = {}&\frac{1+\lambda}{1-\lambda}  e^{i z^2} \\
     dh = {}& \frac{\lambda^2-1}{2\lambda}\, dz  \\
  \end{align*}
  and in the almost horizontal \bj surface in Figure \ref{fig:clothoid}. The only non-elementary functions in the surface parametrization are the Fresnel integrals:
  
  \[
  f(z) = \frac12 \re
  \begin{pmatrix}
  2 \lambda  C(z)+i \left(\lambda ^2+1\right) S(z)\\
  -i \left(\lambda ^2+1\right) C(z)+2  \lambda  S(z) \\
 z \left(\lambda ^2-1\right)  \ .
  \end{pmatrix} 
  \]

\bibliographystyle{plain}
\bibliography{bibliography}

\end{document}